\documentclass[12pt,reqno,fleqn]{amsart}  
\usepackage{tikz}
\usepackage{amsmath,amstext,amsthm,amssymb,amsxtra}
\usepackage{mathrsfs}%for special fonts
\usepackage[top=1.5in, bottom=1.5in, left=1.25in, right=1.25in]	{geometry}
\usepackage{lmodern}

\usepackage{graphics}
\usepackage{euscript}
\usepackage{xcolor}

\usepackage{mathtools}
\mathtoolsset{showonlyrefs,showmanualtags}

\usepackage{hyperref} %,hypertexnames=false,colorlinks,[pagebackref]
\hypersetup{
	%    bookmarks=true,         % show bookmarks bar?
	%    unicode=false,          % non-Latin characters in AcrobatÕs bookmarks
	%    pdftoolbar=true,        % show AcrobatÕs toolbar?
	%    pdfmenubar=true,        % show AcrobatÕs menu?
	%    pdffitwindow=false,     % window fit to page when opened
	%    pdfstartview={FitP},    % fits the width of the page to the window
	%    pdftitle={My title},    % title
	%    pdfauthor={Author},     % author
	%    pdfsubject={Subject},   % subject of the document
	%    pdfcreator={Creator},   % creator of the document
	%    pdfproducer={Producer}, % producer of the document
	%    pdfkeywords={keywords}, % list of keywords
	%    pdfnewwindow=true,      % links in new window
	colorlinks=true,       % false: boxed links; true: colored links
	linkcolor=blue,          % color of internal links
	citecolor=magenta,        % color of links to bibliography
	filecolor=magenta,      % color of file links
	urlcolor=cyan           % color of external links
	%    pagebackref=true
}

\usepackage[msc-links]{amsrefs}

%% Apparently, labels in figures have to go AFTER the caption. 

\newtheorem{theorem}{Theorem}[section]
\newtheorem{problem}{Problem}

\newtheorem{lemma}{Lemma}[section]
\newtheorem{corollary}{Corollary}[section]
\newtheorem{OldTheorem}{Theorem}
\theoremstyle{definition}

\theoremstyle{definition}

\newtheorem{remark}{Remark}[section]

\theoremstyle{remark}

\def\sign{{\rm sign\,}}

\def\ZF{\ensuremath{\mathscr  F}}

\def\constant{{\rm constant}}

\def\ZU{\ensuremath{\mathbb U}}

\def\ZS{\ensuremath{\mathfrak S}}
\def\MM^d{\ensuremath{\mathfrak M}}
\def\MM{\ensuremath{\mathcal M}}

\def\ZB{\ensuremath{\mathscr B}}

\def\ZI{\ensuremath{\textbf 1}}

\def\ZN{\ensuremath{\mathbb N}}

\def\ZD{\ensuremath{\mathscr  D}}

\def\ZR{\ensuremath{\mathbb R}}

\def\pr{\ensuremath{\mathrm {pr}}}

\def\ZE{\ensuremath{\mathbf E}}

\numberwithin{equation}{section}
\def\md#1#2\emd{\ifx0#1
	\begin{equation*} #2 \end{equation*}\fi  %  single line display, no number
	\ifx1#1\begin{equation}#2\end{equation}\fi   % single line display, number
	\ifx2#1\begin{align*}#2\end{align*}\fi   % aligned display, no number
	\ifx3#1\begin{align}#2\end{align}\fi    % aligned display, number
	\ifx4#1\begin{gather*}#2\end{gather*}\fi  % multline, not align, no number
	\ifx5#1\begin{gather}#2\end{gather}\fi   % multinline, not align
	\ifx6#1\begin{multline*}#2\end{multline*}\fi  %  display too long for one line
	\ifx7#1\begin{multline}#2\end{multline}\fi  % as above, with numbers
	\ifx8#1\begin{multline*}\begin{split}#2\end{split}\end{multline*}\fi
	\ifx9#1\begin{multline}\begin{split}#2\end{split}\end{multline}\fi
}
\newcommand {\e }[1]{\eqref{#1}}
\newcommand {\lem }[1]{Lemma \ref{#1}}

\newcommand {\cor }[1]{Corollary \ref{#1}}

\newcommand {\trm }[1]{Theorem \ref{#1}}

\title[] {On the best constants in Khintchine type \\inequalities for martingales}
\subjclass[2000]{Primary: 42C05, 42C10 Secondary: 60G42}
\keywords{martingale difference, Khintchine inequality, Haar system, Rademacher random variables, sub-Gaussian inequality}
\author{Grigori A. Karagulyan}
\address{Faculty of Mathematics and Mechanics, Yerevan State
	University, Alex Manoogian, 1, 0025, Yerevan, Armenia} 
\email{g.karagulyan@ysu.am}
\address{Institute of Mathematics NAS RA, Marshal Baghramian ave., 24/5, Yerevan, 0019, Armenia} 
\email{g.karagulyan@gmail.com}
\thanks{The work was supported by the Higher Education and Science Committee of RA, in the frames of the research project 21AG‐1A045 }
\begin{document}
	\begin{abstract}
		For discrete martingale-difference sequences $d=\{d_1,\ldots,d_n\}$ we consider Khintchine type inequalities, involving certain square function $\ZS(d)$ considered by Chang-Wilson-Wolff in \cite{CWW}. In particular, we prove
		\begin{equation}\label{x72}
			\left\|\sum_{k=1}^nd_k\right\|_p\le 2^{1/2}\big(\Gamma((p+1)/2))/\sqrt{\pi}\big)^{1/p}\|\ZS(d)\|_\infty,\quad p\ge 3,
		\end{equation}
		where the constant on the right hand side is the best possible and the same as known for the Rademacher sums $\sum_{k=1}^na_kr_k$. Moreover, for a fixed $n$ the constant in \e{x72} can be replaced by $\sum_{k=1}^nr_k/\sqrt{n}$. We apply a technique, reducing the general case to the case of Haar and Rademacher sums, that allows also establish a sub-Gaussian estimate
		\begin{equation}
			\ZE\left[\exp\left(\lambda\cdot \left(\frac{\sum_{k=1}^nd_k}{\|\ZS(d)\|_\infty}\right)^2\right)\right]\le \frac{1}{\sqrt{1-2\lambda}},\quad 0<\lambda<1/2,
		\end{equation} 
	where the constant on the right hand side is the best possible.
	\end{abstract}
	\maketitle  
	\section{Introduction}

\subsection{Two martingale square functions}
	Let $(X,\ZF, \mu)$ be a probability space. We consider a discrete filtration, i.e. a sequence of $\sigma$-algebras $\ZF_n\subset \ZF_{n+1}\subset \ZF$, $n=0,1,2,\ldots$, such that each $\ZF_n$ is generated by a finite or countable family $\ZD_n$ of disjoint measurable sets called a partition. Namely, we suppose $\ZD_0=\{X\}$ and each set $A\in \ZD_n$, $n\ge 0$, is a union of some elements of $\ZD_{n+1}$, which we call children of $A$. Given $x\in X$ denote by $V_n(x)$ the unique element of $\ZD_n$ containing the point $x$. In this setting the probability conditional expectation of a random variable $f$ with respect to the $\sigma$-algebra $\ZF_n$ may be written by
	\begin{equation}
		\ZE(f|\ZF_n)(x)=\frac{1}{\mu(V_n(x))}\int_{V_n(x)}f,\quad x\in X.
	\end{equation}
	Recall that a finite sequence of random variables $f=\{f_0, f_1,\ldots,f_n\}$ is said to be a martingale with respect to a filtration $\{\ZF_k\}$ (or a sequence of partitions $\{\ZD_k\}$), if $\ZE(f_k|\ZF_{k-1})=f_{k-1}$, $k=1,2,\ldots,n$. Let $d_k=f_k-f_{k-1}$, $k=1,2,\ldots,n$  be the difference sequence of $f$. 
	Then the definition of the martingale may be equivalently written by $\ZE(d_k|\ZF_{k-1})=0$, $k\ge1$, which in the case of discrete filtration means
	\begin{equation}\label{x57}
		\int_Vd_k=0 \hbox{ whenever }V\in \ZD_{k-1},\quad k=1,2,\ldots,n.
	\end{equation} 
Consider the following non-classical and classical martingale square functions
\begin{align}
	&\ZS f(x)=\left(\sum_{k=1}^n |D_k(x)|^2\right)^{1/2},\hbox{ where } D_k(x)=\sum_{V\in \ZD_{k-1}}\|\ZI_{V}d_k\|_\infty\cdot \ZI_V(x),\label{x42}\\
	&Sf(x)=\left(\sum_{k=1}^n |d_k(x)|^2\right)^{1/2}.\label{x53}
\end{align}
Clearly, we have $S(f)\le \ZS(f)$, since it is easy to check that $|d_k(x)|\le D_k(x)$. In general, these square functions may differ depending on the martingale. We have an equality $S(f)= \ZS(f)$ whenever our filtration $\{\ZF_k\}$ is dyadic, i.e. any element $V\in \ZD_{k-1}$ has exactly two children $V^+,V^-\in \ZD_k$ with $\mu(V^+)=\mu(V^-)=\mu(V)/2$. In the dyadic case we will simply have $D_k(x)=|d_k(x)|$. If the elements of $\ZD_k$ are dyadic intervals of $[0,1)$, then we get a Haar martingale, i.e. we can write 
\begin{equation*}
	f_k=\sum_{j=1}^{2^{k+1}}a_jh_j,\quad k=1,2,\ldots,n,
\end{equation*} 
where $h_j$, $j=1,2,\ldots$, are the Haar functions, defined by 
\begin{align}
	&h_1(x)\equiv 1,\quad h_n(x)=\ZI_{\Delta_n^+}(x)-\ZI_{\Delta_n^-}(x),\quad n\ge 2,\\
	&\Delta_n^+=\left[\frac{j-1}{2^k},\frac{2j-1}{2^{k+1}}\right),\quad \Delta_n^-=\left[\frac{2j-1}{2^{k+1}},\frac{j}{2^k}\right)\label{x66}
\end{align}
if $n=2^k+j$, $n\ge0$, $1\le j\le 2^k$. In the case of Rademacher martingale, i.e. if $d_k(x)=a_kr_k(x)$, where $r_k(x)=\sin(2^k\pi x)$ are the Rademacher independent random variables, we have $D_k(x)=|d_k(x)|\equiv |a_k|$. Thus both square functions are constant and we can write
\begin{equation}
	\ZS(f)=S(f)=\left(\sum_{k=1}^na_k^2\right)^{1/2}.
\end{equation}

\subsection{Sub-Gaussian estimates}
Square function $\ZS$ was first considered by Chang-Wilson-Wolff in \cite{CWW}, proving a sub-Gaussian estimate for infinite sequences of martingales.  The result of \cite{CWW} can be equivalently stated for finite martingale sequences as follows.
\begin{OldTheorem}[\cite{CWW}]\label{OT1}
	If $f=\{f_0,f_1,\ldots,f_n\}$ is a discrete martingale, then for any $\lambda>0$,
	\begin{equation}\label{x19}
		\mu\{ f_n -f_0>\lambda\}\le \exp\left(-\lambda^2/2\|\ZS(f)\|_\infty^2\right).
	\end{equation} 
\end{OldTheorem}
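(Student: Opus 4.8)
The plan is to establish \e{x19} by the classical exponential (Chernoff) method, in the spirit of the Azuma--Hoeffding inequality, the only substantial ingredient being a conditional version of Hoeffding's lemma that upgrades the pointwise bound $|d_k(x)|\le D_k(x)$ to a Gaussian bound on conditional exponential moments.

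First I would reduce to the case $f_0\equiv 0$: replacing $f$ by $f-f_0$ affects neither the difference sequence $\{d_k\}$ nor $\ZS(f)$, and the event in \e{x19} only involves $f_n-f_0=\sum_{k=1}^nd_k$. Put $\sigma=\|\ZS(f)\|_\infty$; the inequality is vacuous when $\sigma=\infty$ and trivial when $\sigma=0$ (then every $d_k\equiv 0$), so assume $0<\sigma<\infty$, which in particular forces each $D_k$ to be finite $\mu$-a.e. The heart of the matter is the claim that
\begin{equation*}
	\ZE\big(e^{tf_n}\big)\le e^{t^2\sigma^2/2}\qquad\text{for every }t>0 .
\end{equation*}

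To prove the claim, fix $t>0$ and set $M_k=\exp\!\big(tf_k-\tfrac{t^2}{2}\sum_{j=1}^kD_j^2\big)$ for $k=0,1,\dots,n$, so that $M_0=1$. Since each $D_k$ is constant on the atoms of $\ZD_{k-1}$, hence $\ZF_{k-1}$-measurable, we have $\ZE(M_k|\ZF_{k-1})=M_{k-1}\,e^{-t^2D_k^2/2}\,\ZE(e^{td_k}|\ZF_{k-1})$. Now fix an atom $V\in\ZD_{k-1}$: by \e{x57} the variable $d_k$ has mean zero for the normalized restriction of $\mu$ to $V$, while by the definition \e{x42} one has $|d_k(x)|\le\|\ZI_Vd_k\|_\infty=D_k(x)$ for $\mu$-a.e.\ $x\in V$; Hoeffding's lemma, applied to a mean-zero random variable with values in $[-c,c]$ where $c=D_k|_V$, gives $\ZE(e^{td_k}|\ZF_{k-1})\le e^{t^2D_k^2/2}$ on $V$. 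Therefore $\ZE(M_k|\ZF_{k-1})\le M_{k-1}$, so $(M_k)$ is a supermartingale and $\ZE(M_n)\le\ZE(M_0)=1$. Since $\sum_{j=1}^nD_j^2=\ZS(f)^2\le\sigma^2$ pointwise, $e^{tf_n}\le e^{t^2\sigma^2/2}\,M_n$, and taking expectations proves the claim.

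With the claim in hand, Chebyshev's inequality applied to $e^{tf_n}$ gives, for every $t>0$,
\begin{equation*}
	\mu\{f_n>\lambda\}\le e^{-t\lambda}\,\ZE\big(e^{tf_n}\big)\le e^{-t\lambda+t^2\sigma^2/2},
\end{equation*}
and the choice $t=\lambda/\sigma^2$ yields $\mu\{f_n>\lambda\}\le e^{-\lambda^2/2\sigma^2}$, which is \e{x19}. I expect the main obstacle to be the conditional Hoeffding step: one must verify carefully that $D_k$ is genuinely $\ZF_{k-1}$-measurable and that $|d_k|\le D_k$ holds $\mu$-a.e.\ on each atom (including the case where $\ZD_{k-1}$ is countably infinite), so that the one-dimensional Hoeffding estimate can be applied atom by atom and then reassembled into the conditional bound; the remaining steps are routine exponential-martingale bookkeeping.
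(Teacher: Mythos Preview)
Your proposal is correct and follows precisely the approach the paper attributes to Herman Rubin in \cite{CWW}: the Azuma--Hoeffding exponential-martingale argument, with Hoeffding's lemma applied atom by atom to obtain the conditional bound $\ZE(e^{td_k}\mid\ZF_{k-1})\le e^{t^2D_k^2/2}$, followed by Chernoff optimization. The paper does not spell out the details itself but simply records that this is the method; your write-up is a faithful execution of it.
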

In fact, this result was stated in \cite{CWW} for the dyadic filtration in the unit cube $[0,1)^n$, but the proof works for an arbitrary discrete filtration (see \cite{CWW}, Theorem 3.1). The proof of \e{x19} stated in \cite{CWW} was suggested by Herman Rubin, which replaces a much longer argument of the authors of \cite{CWW}. Namely, the proof is adapted to the proof of classical Azuma-Hoeffding's martingale inequality \cite{Azu, Hoe}. It is based on a fundamental identity of sequential analysis known in statistics and Hoeffding's lemma \cite{Hoe}. Ivanishvili and Trail in \cite{IvTr} proved a version of inequality \e{x19} for the classical martingale square function \e{x53}, considering homogeneous filtrations $\{\ZF_k\}$, that is  if $U\in \ZD_{k}$ is the children of an element $V\in \ZD_{k-1}$, then $\mu(U)\ge \alpha\mu(V)$, where $0<\alpha\le 1/2$ is a common constant for all such choices. 
\begin{OldTheorem}[\cite{IvTr}]\label{OT2}
	Let $f=\{f_0,f_1,\ldots,f_n\}$ be a discrete martingale with respect to an $\alpha$-homogeneous filtration.  Then
	\begin{equation}\label{x20}
		\mu\{x\in X:\, f_n-f_0>\lambda\}\le \exp\left(-\alpha \lambda^2/\|S(f)\|_\infty^2\right),\quad \lambda>0.
	\end{equation} 
\end{OldTheorem}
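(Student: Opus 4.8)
The plan is to run the Azuma--Hoeffding argument behind \otrm{OT1}, but to replace Hoeffding's lemma by a sharper exponential estimate that exploits the $\alpha$-homogeneity. Fix $\lambda>0$. By the exponential Chebyshev inequality,
\begin{equation*}
\mu\{x\in X:\ f_n-f_0>\lambda\}\le e^{-t\lambda}\,\ZE\big[e^{t(f_n-f_0)}\big],\qquad t>0,
\end{equation*}
so it suffices to establish the exponential moment bound
\begin{equation*}
\ZE\big[e^{t(f_n-f_0)}\big]\le \exp\left(\frac{t^2\,\|S(f)\|_\infty^2}{4\alpha}\right),\qquad t>0,
\end{equation*}
and then to minimize over $t$: the minimum is attained at $t=2\alpha\lambda/\|S(f)\|_\infty^2$ and equals $\exp(-\alpha\lambda^2/\|S(f)\|_\infty^2)$, which is \e{x20}.

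The crux is a Hoeffding-type lemma adapted to $\alpha$-homogeneity: if $Z$ has $\ZE Z=0$ and takes finitely many values $c_1,\dots,c_m$ with probabilities $p_1,\dots,p_m$, each at least $\alpha$ (so automatically $m\le 1/\alpha$), then for every real $t$,
\begin{equation*}
\ZE\left[\exp\left(tZ-\frac{t^2}{4\alpha}Z^2\right)\right]=\sum_{i=1}^m p_i\exp\left(tc_i-\frac{t^2c_i^2}{4\alpha}\right)\le 1 .
\end{equation*}
For $\alpha=1/2$ this is precisely the two-point estimate $\cosh u\le e^{u^2/2}$ behind \otrm{OT1}. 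To prove it in general I would set $h(c)=\exp(tc-t^2c^2/(4\alpha))$, fix the weights $p_i$, and maximize $\sum_i p_i h(c_i)$ under the constraint $\sum_i p_i c_i=0$ (the maximum is attained since $h$ is bounded and vanishes at infinity). Since $h'=h\cdot(t-t^2c/(2\alpha))$ takes each value at most twice, the Lagrange conditions at the maximizer force the optimal configuration to take at most two distinct values; because every mass is $\ge\alpha$, the two corresponding total masses $q$ and $1-q$ lie in $[\alpha,1-\alpha]$. Writing the two values as $a=-(1-q)r$ and $b=qr$ (so that the mean vanishes), the inequality reduces to a one-parameter estimate in $r$ that I would settle by checking that its left side equals $1$ at $r=0$, has nonpositive second derivative there (this is exactly where the hypothesis $\alpha\le 1/2$, hence $1-\tfrac1{2\alpha}\le 0$, enters), tends to $0$ as $|r|\to\infty$, and has no interior critical value exceeding $1$.

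Granting the lemma, the exponential moment bound follows by a supermartingale computation. Put
\begin{equation*}
M_j=\exp\left(t f_j-\frac{t^2}{4\alpha}\sum_{k=1}^j d_k^2\right),\qquad j=0,1,\dots,n,
\end{equation*}
so that $M_j=M_{j-1}\exp\big(td_j-\tfrac{t^2}{4\alpha}d_j^2\big)$ and $M_0=e^{tf_0}$ is constant. On an atom $V\in\ZD_{j-1}$ the increment $d_j$ has conditional mean $0$ by \e{x57} and, by $\alpha$-homogeneity, attains its values on the children of $V$ with conditional probabilities $\ge\alpha$; the lemma applied on $V$ gives $\ZE[\,e^{td_j-(t^2/4\alpha)d_j^2}\mid\ZF_{j-1}\,]\le1$, that is, $\ZE[\,M_j\mid\ZF_{j-1}\,]\le M_{j-1}$. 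Hence $\ZE M_n\le\ZE M_0=e^{tf_0}$, and since $S(f)^2=\sum_{k=1}^n d_k^2\le\|S(f)\|_\infty^2$ pointwise,
\begin{multline*}
\ZE\big[e^{t(f_n-f_0)}\big]=e^{-tf_0}\,\ZE\left[M_n\exp\left(\frac{t^2}{4\alpha}S(f)^2\right)\right]\\
\le e^{-tf_0}\exp\left(\frac{t^2}{4\alpha}\|S(f)\|_\infty^2\right)\ZE M_n\le\exp\left(\frac{t^2}{4\alpha}\|S(f)\|_\infty^2\right),
\end{multline*}
which is exactly what is needed. There are no measure-theoretic subtleties: $\alpha$-homogeneity forces each $\ZD_k$ to be finite (at most $(1/\alpha)^k$ atoms), so all the sums above are genuinely finite.

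The main obstacle is the sharp lemma, in particular the final one-parameter verification: $h$ is not concave on all of $\ZR$ (it is a rescaled Gaussian bump), so a bare Jensen argument fails; what rescues the estimate is precisely that $p_i\ge\alpha$ keeps the two masses bounded away from $0$ and $1$, which together with $\alpha\le 1/2$ produces the needed log-concavity-type inequality. Everything else---the Chernoff reduction, the supermartingale telescoping, the optimization in $t$---is routine once the lemma is in place.
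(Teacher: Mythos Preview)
The paper does not contain a proof of \otrm{OT2}: it is quoted verbatim as a result of Ivanisvili--Treil \cite{IvTr}, with no argument supplied here, so there is nothing in the present paper to compare your proposal against.

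For what it is worth, your outline is the right one and is essentially the route taken in \cite{IvTr}: run the Rubin/Azuma--Hoeffding scheme that the paper describes for \otrm{OT1}, but with Hoeffding's lemma upgraded to the $\alpha$-dependent bound $\ZE[\exp(tZ-\tfrac{t^2}{4\alpha}Z^2)]\le1$ for mean-zero $Z$ whose atoms all carry mass at least $\alpha$. Your Lagrange-multiplier reduction to a two-point configuration is sound (the derivative of the Gaussian bump $h$ does take each value at most twice), and the parametrisation $a=-(1-q)r$, $b=qr$ with $q\in[\alpha,1-\alpha]$ is the natural one. The supermartingale computation with $M_j$ and the final optimisation in $t$ are routine and correctly set up.

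The one place that is not yet a proof is exactly where you flag it: the ``one-parameter verification'' that $F(r)=q\,h(-(1-q)r)+(1-q)\,h(qr)\le1$ for all $r$. Your observations that $F(0)=1$, $F'(0)=0$, $F''(0)=q(1-q)t^2(1-\tfrac{1}{2\alpha})\le0$, and $F(r)\to0$ at infinity are all correct, but they do not by themselves exclude an interior bump above $1$; that step needs an actual argument (in \cite{IvTr} this is where the work is). So the proposal has the right architecture and no wrong steps, but the crucial lemma is asserted rather than established.
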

A counterexample provided in \cite{IvTr} shows that a sub-Gaussian estimate like \e{x20}, involving the classical square function, fails for non-homogeneous filtrations.

\subsection{Khintchine type inequalities}
One can state Theorems \ref{OT1} and \ref{OT2} in the terms of a martingale difference sequences 
\begin{equation}\label{x56}
	d=\{d_1,d_2,\ldots,d_n\}, 
\end{equation}
substituting $f_n-f_0=\sum_{k=1}^nd_k$ in inequalities \e{x19} and \e{x20}. For the sake of convenience in the sequel everything will be stated in terms  of \e{x56}, denoting the square function \e{x42} by $\ZS(d)$. Denote
\begin{equation}\label{x30}
	A_{p,n}=\sup_{d=\{d_1,d_2,\ldots,d_n\}}\frac{	\left\|\sum_{k=1}^nd_k\right\|_p}{\|\ZS(d)\|_\infty},
\end{equation}
where $\sup$ is taken over all non-trivial martingale-difference sequences \e{x56} with a fixed number of elements $n$.  Obviously we have $A_{p,n}\le A_{p,n+1}$. Taking $\sup$ in \e{x30} over all martingale-difference sequences \e{x56}, without restriction on the number of elements, we get another constant $A_p$, for which clearly we have $A_p=\lim_{n\to\infty}A_{p,n}$. Considering only Haar or Rademacher martingales in \e{x30}, the corresponding constants will be denoted by $A_{p,n}({\rm Haar})$ and $A_{p,n}({\rm Rademacher})$ respectively. Obviously,
\begin{equation}\label{x45}
	A_{p,n}\ge A_{p,n}({\rm Haar})\ge A_{p,n}({\rm Rademacher}).
\end{equation}
It is well known that
\begin{equation}\label{x65}
	A_p({\rm Rademacher})=2^{1/2}\big(\Gamma((p+1)/2))/\sqrt{\pi}\big)^{1/p},\quad p>2,
\end{equation}
and equivalently, every Rademacher sum satisfies the inequality
\begin{equation}\label{x55}
	\left\|\sum_{k=1}^na_kr_k\right\|_p\le 2^{1/2}\big(\Gamma((p+1)/2))/\sqrt{\pi}\big)^{1/p}\left(\sum_{k=1}^na_k^2\right)^{1/2},
\end{equation}
where the constant on the right hand side is the best possible. For even integers $p\ge 4$ the proof of this inequality goes back to Khintchine's work \cite{Khi}, which sharpness later was proved by Stechkin \cite{Ste}.  Later on Young \cite{You} established \e{x55} for all real numbers $p\ge 3$, and finally in 1982 Haagerup \cite{Haa} introduced a new method, proving sharp bound \e{x55} for all parameters $p>2$. Komorowski in \cite{Kom} proved 
\begin{equation}\label{x62}
	A_{p,n}({\rm Rademacher})=\left\|\frac{1}{\sqrt{n}}\sum_{k=1}^nr_k\right\|_p,\quad p\ge 3,
\end{equation}
i.e.  if $p\ge 3$ and $n$ are fixed, then the best constant in \e{x55} is $\left\|\frac{1}{\sqrt{n}}\sum_{k=1}^nr_k\right\|_p$. 
For even $p>3$ this relation earlier was established by Efron \cite{Efr} and Eaton \cite{Eat}. The proof of \e{x62} for general parameters $p\ge 3$ given in \cite{Kom} is based on an inequality provided in \cite{Eat}. See also \cite{PeSh} for a detailed review of the subject.

\begin{remark}
	Recall that sub-Gaussian estimate \e{x19} imply a Khintchine type inequality $A_p\le c\sqrt p$, but this is a too rough approach to obtaining the exact value of $A_p$ even if the constant in \e{x19} is sharp. 
\end{remark}
\subsection{Main results}
In this paper we prove that inequalities in \e{x45} must actually be equalities whenever $p\ge 3$. This implies extensions of Khintchine's inequality \e{x55} and relation \e{x62} for general discrete martingales. The main results of the paper are the following theorems.
\begin{theorem}\label{T1}
	For any $p> 2$ we have  $	A_{p,n}=A_{p,n}({\rm Haar})$.
\end{theorem}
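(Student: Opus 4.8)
The plan is to establish the opposite inequality to \eqref{x45}, namely $A_{p,n}\le A_{p,n}(\mathrm{Haar})$. I would prove that in the supremum \eqref{x30} one may restrict to \emph{dyadic} filtrations (in the sense already fixed above: every atom of $\ZD_{k-1}$ splits into exactly two children of equal measure). For a dyadic filtration the map sending each atom of $\ZD_k$ to the corresponding dyadic interval of length $2^{-k}$ in $[0,1)$ respects the parent--child relation and the measures, so a dyadic martingale on $(X,\ZF,\mu)$ has a Haar counterpart on $[0,1)$ with identical values on corresponding atoms; since $\big\|\sum_k d_k\big\|_p$ and $\|\ZS(d)\|_\infty$ depend only on those values and on the measures of atoms, this gives $A_{p,n}(\mathrm{dyadic})\le A_{p,n}(\mathrm{Haar})$. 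Together with the trivial $A_{p,n}(\mathrm{Haar})\le A_{p,n}$, it therefore suffices to prove $A_{p,n}\le A_{p,n}(\mathrm{dyadic})$.

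For the reduction I would introduce, for $a\in\ZR$, $\rho\ge 0$ and an integer $m\ge 0$, the quantity $B_m(a,\rho)=\sup\big\|\,a+\sum_{k=1}^m d_k\big\|_p$, the supremum taken over all length-$m$ martingale-difference sequences $d$ (on an arbitrary probability space, with an arbitrary filtration) with $\|\ZS(d)\|_\infty\le\rho$, and its analogue $B^{\mathrm{dy}}_m(a,\rho)$ in which only dyadic $d$ are admitted. By Cauchy--Schwarz $\big|\sum_k d_k\big|\le\sqrt m\,S(d)\le\sqrt m\,\ZS(d)$, so these are finite; by homogeneity $A_{p,m}=B_m(0,1)$ and $A_{p,m}(\mathrm{dyadic})=B^{\mathrm{dy}}_m(0,1)$; and the goal reduces to the identity $B_m(a,\rho)=B^{\mathrm{dy}}_m(a,\rho)$ for all $a,\rho,m$, which I would prove by induction on $m$ ($m=0$ being trivial, both sides equal $|a|$; the inequality $\ge$ is always clear). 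For the step, take a length-$m$ martingale $d$ on $X$ with $f_0=a$ and $\|\ZS(d)\|_\infty\le\rho$, let $X=\bigsqcup_i V_i$ be the children of $X$ in $\ZD_1$, and let $s_i$ be the constant value of $d_1$ on $V_i$, so $\sum_i\mu(V_i)s_i=0$ and $c:=\max_i|s_i|=\|d_1\|_\infty$. On each $V_i$ the remaining differences form a length-$(m-1)$ martingale with initial value $a+s_i$, and --- this is the point --- its square function is at most $\rho':=\sqrt{\rho^2-c^2}$ on $V_i$, because $D_1\equiv c$ on all of $X$ and hence the residual budget $\rho^2-D_1^2$ passed to every child is the same. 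Summing the resulting bounds over $i$ gives $\big\|a+\sum_k d_k\big\|_p^p\le\sum_i\mu(V_i)\,\Phi(s_i)$ with $\Phi(s):=B_{m-1}(a+s,\rho')^p$. By the inductive hypothesis $\Phi(s)=B^{\mathrm{dy}}_{m-1}(a+s,\rho')^p$, and $\Phi$ is convex, being a supremum of the convex functions $b\mapsto\int|b+\sum_k d_k|^p$; combined with $|s_i|\le c$ and $\sum_i\mu(V_i)s_i=0$ this yields $\sum_i\mu(V_i)\Phi(s_i)\le\tfrac12\Phi(c)+\tfrac12\Phi(-c)$. Finally $\tfrac12\Phi(c)+\tfrac12\Phi(-c)$ is, up to an arbitrarily small error, the $p$-th power of the $L^p$-norm of the dyadic martingale obtained by halving $X$, putting $d_1=\pm c$ on the two halves, and attaching on each half a near-optimal dyadic martingale of length $m-1$, budget $\rho'$ and initial value $a\pm c$; that martingale has $\|\ZS\|_\infty\le\sqrt{c^2+\rho'^2}=\rho$, so $B^{\mathrm{dy}}_m(a,\rho)^p\ge\tfrac12\Phi(c)+\tfrac12\Phi(-c)\ge\big\|a+\sum_k d_k\big\|_p^p$. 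Taking the supremum over $d$ closes the induction; the case $m=n$, $a=0$, $\rho=1$ then gives $A_{p,n}=A_{p,n}(\mathrm{dyadic})$, and with \eqref{x45} and $A_{p,n}(\mathrm{dyadic})\le A_{p,n}(\mathrm{Haar})\le A_{p,n}$ this yields the theorem.

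The principal obstacle is keeping the square function from growing under the surgery, and it is precisely for this reason that one peels off the \emph{first} difference rather than the last: if one fixed the top of the tree and replaced $d_{m-1},d_m$ on each atom by their two-valued extremal versions, a single point could inherit the largest value of $D_{m-1}$ and of $D_m$ simultaneously, inflating $\sum_k D_k^2$ past $\|\ZS(d)\|_\infty^2$. Peeling off $d_1$ avoids this because $D_1$ is constant on $X$, so every child receives exactly the same leftover budget $\rho^2-c^2$; after that one only has to push the objective $\sum_i\mu(V_i)\Phi(s_i)$ --- automatically convex in the $s_i$ --- to its extreme configuration $s_i=\pm c$ and then realise the resulting value by an honest dyadic martingale of the correct budget, both of which are elementary. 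The remaining ingredients are routine bookkeeping: that the square function of $d$ restricted to $V_i$ equals $\big(\ZS(d)^2-D_1^2\big)^{1/2}$ there, the finiteness and approximation arguments for the suprema, and the identification of dyadic martingales with Haar martingales.
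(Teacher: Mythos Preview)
Your argument is correct and yields the theorem as stated. It is, however, organised quite differently from the paper's proof, and the comparison is worth recording.

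The paper proceeds by explicit surgery on a given MD-system, making the filtration dyadic one level at a time. For each $k$ it performs two operations: first (relation R1) it replaces $d_k$ on every parent $V\in\ZD_{k-1}$ by a two-valued function taking the values $\pm\|\ZI_V d_k\|_\infty$ in proportions chosen so that all conditional averages are preserved, and uses Jensen's inequality to see that $\|d\|_p$ can only go up while $\ZS(d)$ is unchanged; second (relation R2) it collapses the many children of $V$ into two by picking, for each sign, the child on which the $p$-th moment of the tail is maximal and replicating its internal structure on all siblings of the same sign. Iterating over $k$ produces a dyadic system with at least as large a ratio $\ZU$.

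Your proof replaces these constructions by an induction on the length $m$ via the Bellman-type quantity $B_m(a,\rho)$. The essential content is the same convexity of $t\mapsto|t|^p$, but packaged differently: because $D_1$ is constant on the whole space, every child of $X$ inherits the same residual budget $\rho'=\sqrt{\rho^2-c^2}$, and the objective becomes a mean of the convex function $\Phi(s)=B_{m-1}^{\,\mathrm{dy}}(a+s,\rho')^p$ against a mean-zero distribution supported in $[-c,c]$, which is maximised at the two-point distribution $\tfrac12\delta_c+\tfrac12\delta_{-c}$. This single convexity step subsumes both R1 and R2 simultaneously, so the argument is shorter and more conceptual; it also makes transparent why one peels from the root (so that the residual budget is uniform) rather than from the leaves. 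The paper's approach, on the other hand, is fully constructive: from any MD-system it manufactures an explicit dyadic competitor, which is sometimes useful information beyond the equality of suprema.

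A minor remark: neither your proof nor the paper's actually uses $p>2$ in this theorem; convexity of $|\cdot|^p$ for $p\ge 1$ suffices. The restriction to $p>2$ is only to make the quantity $A_{p,n}$ nontrivial.
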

\begin{theorem}\label{T2}
	If $p\ge 3$, then 
	\begin{equation}\label{x10}
	A_{p,n}=A_{p,n}({\rm Haar})=A_{p,n}({\rm Rademacher})=\left\|\frac{1}{\sqrt{n}}\sum_{k=1}^nr_k\right\|_p,
\end{equation}
where $r_k$ are the Rademacher functions.
\end{theorem}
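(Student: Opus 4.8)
The plan is to derive \trm{T2} from \trm{T1}, from the obvious chain \e{x45}, and from Komorowski's identity \e{x62}, by supplying the one missing inequality
\[
A_{p,n}(\mathrm{Haar})\le A_{p,n}(\mathrm{Rademacher}),\qquad p\ge 3 .
\]
Realizing a Haar martingale on the dyadic tree of $[0,1)$, its difference sequence has the form $d_k=b_k r_k$ with an $\ZF_{k-1}$-measurable coefficient $b_k$, and $D_k=|b_k|$; hence $\|\ZS(d)\|_\infty\le 1$ means exactly $\sum_{k=1}^n b_k(x)^2\le 1$ for \emph{every} $x$, i.e.\ along every branch of the tree. So everything reduces to showing that such an ``adapted Rademacher sum'' obeys $\|\sum_{k=1}^n d_k\|_p\le\|\tfrac1{\sqrt n}\sum_{k=1}^n r_k\|_p$ when $p\ge 3$; combined with \e{x62}, \e{x45} and \trm{T1} this forces the four quantities in \e{x10} to coincide.

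I would prove, by induction on the number of levels $m$, the following sharpening: for every $c\in\ZR$, every $B\ge 0$, and every $m$-level Haar difference sequence $d$ with $\sum_{k=1}^m D_k(x)^2\le B^2$ for all $x$,
\begin{equation}\label{xind}
\ZE\left|c+\sum_{k=1}^m d_k\right|^p\le \ZE\left|c+\frac{B}{\sqrt m}\sum_{k=1}^m r_k\right|^p .
\end{equation}
For $m=1$ this is the monotonicity of $b\mapsto|c+b|^p+|c-b|^p$ on $[0,\infty)$, evaluated at $\|d_1\|_\infty\le B$. For the step ($m\ge 2$), the top split $\ZD_1=\{V^+,V^-\}$ is binary with $\mu(V^\pm)=1/2$, and $\ZD_0=\{X\}$, so $d_1=\alpha r_1$ with the \emph{constant} $\alpha=\|d_1\|_\infty$, while on each of $V^+,V^-$ the square function of $d_2,\dots,d_m$ is bounded by $\sqrt{B^2-\alpha^2}$. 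Rescaling $V^\pm$ to a probability space, applying the inductive hypothesis there with the shifted constants $c\pm\alpha$ and the bound $\sqrt{B^2-\alpha^2}$, and averaging the two halves with weights $\tfrac12$, one obtains
\[
\ZE\left|c+\sum_{k=1}^m d_k\right|^p\le \ZE\left|c+\alpha r_1+\frac{\sqrt{B^2-\alpha^2}}{\sqrt{m-1}}\sum_{k=2}^m r_k\right|^p=\ZE\left|c+\sum_{k=1}^m a_kr_k\right|^p ,
\]
the weight vector $a=(\alpha,\gamma,\dots,\gamma)$ with $\gamma=\sqrt{B^2-\alpha^2}/\sqrt{m-1}$ satisfying $\sum_k a_k^2=B^2$.

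To close the step one has to replace the unbalanced $a$ by the balanced weight vector $(B/\sqrt m,\dots,B/\sqrt m)$. Here I would invoke the majorization property of Rademacher sums behind \e{x62}: for $p\ge 3$, if $(\tilde a_k^2)$ is majorized by $(a_k^2)$ then $\|\sum_k\tilde a_kr_k\|_p\ge\|\sum_k a_kr_k\|_p$, i.e.\ $(a_k)\mapsto\ZE|\sum_k a_kr_k|^p$ is Schur-concave in $(a_k^2)$ (the special case with $\tilde a$ uniform and $c$ absent is \e{x62} itself; the general statement is Eaton's inequality \cite{Eat}, used in this form by Komorowski \cite{Kom}). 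The auxiliary constant $c$ is absorbed by the identity $\ZE|c+S|^p=\ZE|c\,r_{m+1}+S|^p$, valid whenever $S$ is symmetric and independent of an extra Rademacher variable $r_{m+1}$: it rewrites both sides of the needed inequality as $p$-th moments of $(m+1)$-term Rademacher sums sharing the coefficient $c$, with weight-square vectors $(c^2,\alpha^2,\gamma^2,\dots,\gamma^2)$ and $(c^2,B^2/m,\dots,B^2/m)$; the latter is majorized by the former, since the uniform block $(B^2/m,\dots,B^2/m)$ is majorized by any block of equal length and sum and adjoining a common coordinate preserves majorization. Schur-concavity then gives \e{xind} for $m$, and taking $c=0$, $B=1$, $m=n$ yields $A_{p,n}(\mathrm{Haar})\le A_{p,n}(\mathrm{Rademacher})$, hence \e{x10}.

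The main obstacle is the sharp majorization (Schur-concavity) input at real exponents $p\ge 3$ in the ``partial-balancing'' form used above — extending Eaton-type inequalities (classically stated for even integers and for the full balancing of all coordinates) to arbitrary $p\ge 3$ and to an arbitrary subcollection of the weights. The remaining ingredients — the binary peeling at the root, the symmetrization identity, and the bookkeeping with the shift $c$ — are elementary.
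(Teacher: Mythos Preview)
Your argument is correct, and the input you flag as the ``main obstacle'' --- the shifted balancing inequality
\[
\ZE\Big|c+\sum_{k=1}^m a_kr_k\Big|^p\le \ZE\Big|c+\frac{B}{\sqrt m}\sum_{k=1}^m r_k\Big|^p\quad\text{whenever }\sum_k a_k^2=B^2,\ p\ge 3,
\]
is precisely \lem{L6} of the paper, proved there via \lem{L3} and \lem{L4} (the Komorowski and Whittle ingredients you cite); so there is no genuine gap. Your route differs from the paper's in the \emph{direction} of the reduction. You run a top-down induction on the depth $m$, peeling off the root level (always of the form $\alpha r_1$ because $\ZF_0$ is trivial) while carrying a free shift $c$ and a free budget $B\ge\|\ZS(d)\|_\infty$. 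The paper works bottom-up: it introduces the class of $m$-Rademacher dyadic systems (those with $|d_m|=\cdots=|d_n|$ constant on each $V\in\ZD_{m-1}$) and pushes $m$ from $n$ down to $1$ via two procedures --- first scaling up the smaller branch using \lem{L8}, then rebalancing $d_{m-1},\dots,d_n$ using \lem{L6}. Your formulation with the slack $B$ is a bit more economical, since the convexity in the base case absorbs what \lem{L8} accomplishes for the paper; conversely, the paper applies \lem{L6} directly with the shift $\xi=c$, bypassing the symmetrization identity $\ZE|c+S|^p=\ZE|c\,r_{m+1}+S|^p$ that you use to reduce to unshifted Schur-concavity.
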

\trm{T2} is an extension of the results of \cite{Kom} (see \e{x62}) for general martingale-differences. Relation \e{x10} implies the following Khintchine type inequality.
\begin{corollary}\label{C1}
	If $p\ge 3$, then for any discrete martingale-difference \e{x56} we have the bound
	\begin{equation}\label{x31}
	\left\|\sum_{k=1}^nd_k\right\|_p\le \left\|\frac{1}{\sqrt{n}}\sum_{k=1}^nr_k\right\|_p \|\ZS(d)\|_\infty,
	\end{equation}
which is sharp, since for $d_k=r_k/\sqrt{n}$ we have equality in \e{x31}.
\end{corollary}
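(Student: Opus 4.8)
The plan is to obtain \e{x31} as a direct reformulation of \trm{T2}. The starting point is the definition \e{x30} of the extremal constant $A_{p,n}$ as the supremum of the ratio $\left\|\sum_{k=1}^n d_k\right\|_p\big/\|\ZS(d)\|_\infty$ over all non-trivial martingale-difference sequences with $n$ terms. Unwinding this supremum, one reads off, for every non-trivial $d$, the inequality $\left\|\sum_{k=1}^n d_k\right\|_p \le A_{p,n}\,\|\ZS(d)\|_\infty$; the trivial sequence $d\equiv 0$ satisfies \e{x31} with both sides equal to zero, so it may be set aside. Since $p\ge 3$, \trm{T2} evaluates $A_{p,n}$ as $\left\|\tfrac{1}{\sqrt n}\sum_{k=1}^n r_k\right\|_p$, and substituting this value into the previous inequality yields \e{x31} verbatim.

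For the sharpness assertion I would test \e{x31} on the Rademacher martingale-difference $d_k = r_k/\sqrt n$, $k=1,\dots,n$. As recalled in the introduction, for a Rademacher sequence $d_k = a_k r_k$ one has $D_k \equiv |a_k|$, hence $\ZS(d)=S(d)=\left(\sum_{k=1}^n a_k^2\right)^{1/2}$; with $a_k = 1/\sqrt n$ this square function is the constant $1$, so $\|\ZS(d)\|_\infty = 1$. Consequently the left-hand side of \e{x31} equals $\left\|\tfrac1{\sqrt n}\sum_{k=1}^n r_k\right\|_p$ and the right-hand side equals the same quantity times $1$, which establishes equality.

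There is no genuine obstacle at this stage: the proof rests entirely on \trm{T2}, where all of the work lies—showing that the chain \e{x45} collapses to equalities and that the common value is the Komorowski quantity \e{x62}. Granting \trm{T2}, the corollary is immediate, and the Rademacher example above shows that \trm{T2} supplies not merely an extremizing sequence but an actual extremizer for \e{x30}, which is what makes the constant in \e{x31} literally best possible for each fixed $n$.
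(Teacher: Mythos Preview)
Your proof is correct and takes essentially the same approach as the paper, which simply states that Corollaries~\ref{C1} and~\ref{C2} ``immediately follow from Theorems~\ref{T1} and~\ref{T2}.'' You have merely spelled out in detail what ``immediately'' means here: unwind the definition~\e{x30} of $A_{p,n}$, substitute the value supplied by \trm{T2}, and verify sharpness on the Rademacher extremizer $d_k=r_k/\sqrt{n}$.
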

It follows from the central limit theorem that
\begin{equation}
	A_{p,n}=\left\|\frac{1}{\sqrt{n}}\sum_{k=1}^nr_k\right\|_p \to 2^{1/2}\big(\Gamma((p+1)/2))/\sqrt{\pi}\big)^{1/p} \hbox{ as } n\to\infty
\end{equation}
and applying the sharpness of \e{x31}, we can say that $A_{p,n}$ is increasing with respect to $n$ if $p\ge 3$. Thus from \e{x31} it follows that 
\begin{equation}\label{x58}
	\left\|\sum_{k=1}^nd_k\right\|_p\le 2^{1/2}\big(\Gamma((p+1)/2))/\sqrt{\pi}\big)^{1/p}\|\ZS(d)\|_\infty,\quad p\ge 3,
\end{equation}
which is an extension of inequality \e{x55} in the case $p\ge 3$ for general discrete martingale-difference sequences. Also we note that in the case of dyadic martingale the square function $\ZS(d)$ in \e{x58} can be replaced by the classical one $S(d)$. In particular, 
\begin{corollary}\label{C2}
If $\{h_k\}$ is the Haar system defined by \e{x66} and $p\ge 3$, then the inequality 
\begin{equation}\label{x80}
	\left\|\sum_{k=1}^na_kh_k\right\|_p\le 2^{1/2}\big(\Gamma((p+1)/2))/\sqrt{\pi}\big)^{1/p}\left\|\left(\sum_{k=1}^na_k^2h_k^2\right)^{1/2}\right\|_\infty, \quad p\ge 3,
\end{equation}
holds for any coefficients $a_k$.
\end{corollary}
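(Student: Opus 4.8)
The plan is to obtain \e{x80} as a special case of \e{x58} (equivalently, of \cor{C1}), using the remark recorded just above the corollary that $\ZS(d)=S(d)$ for every dyadic filtration. So it suffices to realize a given Haar sum as the difference sum $f_m-f_0$ of a dyadic martingale whose square function is exactly $\big(\sum_k a_k^2h_k^2\big)^{1/2}$, and then quote \e{x58}. When the constant term vanishes this is immediate: for $g=\sum_{k=2}^na_kh_k$, group the Haar functions by generation and set $d_j:=\sum_{2^{j-1}<k\le 2^{j}}a_kh_k$; these are the differences of the dyadic (Haar) martingale $f_j=\sum_{k=1}^{2^{j}}a_kh_k$ on $[0,1)$, with $f_0=0$ and $f_m=g$ once $2^{m}\ge n$. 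Dyadicity gives $\ZS(d)=S(d)$, and since the $h_k$ of a fixed generation have pairwise disjoint supports, $|d_j(x)|^2=\sum_{2^{j-1}<k\le 2^{j}}a_k^2h_k^2(x)$; summing in $j$, $\ZS(d)^2=S(d)^2=\sum_{k\ge 2}a_k^2h_k^2$. Then \e{x58} applied to $d$ is \e{x80} in the case $a_1=0$.

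The one point that is not entirely routine is the constant term $a_1h_1\equiv a_1$, because $h_1$ is not a martingale difference and working directly on $[0,1)$ would force one to estimate $\|a_1+F\|_p$ in terms of $\|F\|_p$, where the triangle inequality loses too much. Instead I would pass to the probability space $\{-1,1\}\times[0,1)$ with the product of the uniform measures; writing $\varepsilon$ for the first coordinate and $\tilde g(\varepsilon,x):=\varepsilon\,g(x)$ for $g=\sum_{k=1}^na_kh_k$, one has $\|\tilde g\|_p=\|g\|_p$ (since $|\varepsilon|=1$) and $\ZE\tilde g=0$. Equip this space with the dyadic filtration $\ZF_0$ trivial, $\ZF_1=\sigma(\varepsilon)$, $\ZF_j=\sigma(\varepsilon)\vee\sigma(\text{first }j-1\text{ binary digits of }x)$ for $j\ge 2$; then $f_j:=\ZE(\tilde g\mid\ZF_j)$ has $f_0=0$, $f_m=\tilde g$ for $m$ large, and differences $d_1=a_1\varepsilon$ and $d_j=\varepsilon\sum_{2^{j-2}<k\le 2^{j-1}}a_kh_k$ for $j\ge 2$. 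Again the filtration is dyadic, so $\ZS(d)^2=S(d)^2=a_1^2+\sum_{k\ge 2}a_k^2h_k^2=\sum_{k\ge 1}a_k^2h_k^2$, the last equality being exactly the place where $h_1\equiv 1$ is used. Hence $\|\ZS(d)\|_\infty=\big\|\big(\sum_{k=1}^na_k^2h_k^2\big)^{1/2}\big\|_\infty$, and \e{x58} applied to $d=\{d_1,\dots,d_m\}$ gives $\big\|\sum_{k=1}^na_kh_k\big\|_p=\|\tilde g\|_p\le 2^{1/2}\big(\Gamma((p+1)/2)/\sqrt{\pi}\big)^{1/p}\big\|\big(\sum_{k=1}^na_k^2h_k^2\big)^{1/2}\big\|_\infty$, which is \e{x80}. (Equivalently, one may replace $\{-1,1\}\times[0,1)$ by $[0,2)$ and $\tilde g$ by the odd reflection of $g$ about the point $x=1$.)

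Thus the only real obstacle is the bookkeeping around $h_1$: doubling the space converts the constant $a_1$ into a bona fide first martingale difference $d_1$ whose square-function contribution $a_1^2$ coincides with $a_1^2h_1^2$ precisely because $h_1\equiv 1$, so the two sides of \e{x80} line up. The remaining verifications — that the $d_j$ are indeed the differences of $f_j$, that supports within a generation are disjoint (so squares add), and that $\|\tilde g\|_p=\|g\|_p$ — are straightforward.
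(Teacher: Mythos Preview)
Your argument is correct and follows exactly the route the paper indicates: the paper merely records that Corollaries~\ref{C1} and~\ref{C2} ``immediately follow from Theorems~\ref{T1} and~\ref{T2}'' (via \e{x58} together with $\ZS(d)=S(d)$ in the dyadic case), giving no further details. The one subtlety you isolate --- that $h_1\equiv 1$ is not itself a martingale increment, so the constant term $a_1h_1$ requires the symmetrization/doubling step to become a genuine first difference $d_1=a_1\varepsilon$ --- is left implicit in the paper; your treatment of it is the standard device and simply fleshes out the paper's one-line proof rather than taking a different approach.
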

Let us also state the following sharp sub-Gaussian inequalities.
\begin{corollary}\label{C3}
	For any discrete martingale-difference \e{x56} and for any number $0<\lambda <1/2$ we have
	\begin{equation}\label{x59}
		\ZE\left[\exp\left(\lambda\cdot \left(\frac{\sum_{k=1}^nd_k}{\|\ZS(d)\|_\infty}\right)^2\right)\right]\le \frac{1}{\sqrt{1-2\lambda}},
	\end{equation} 
where the constant on the right hand side is the best possible. Moreover, it is the best constant if we consider only dyadic or Rademacher martingales in \e{x59} . 
\end{corollary}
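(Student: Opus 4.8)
The plan is to derive \e{x59} from the sharp moment estimate \e{x58} by a Taylor expansion of the exponential. After normalizing so that $\|\ZS(d)\|_\infty=1$ and writing $S=\sum_{k=1}^nd_k$, every summand of $\sum_{m\ge 0}\frac{\lambda^m}{m!}S^{2m}$ is nonnegative, so the monotone convergence theorem gives
\[
\ZE\big[\exp(\lambda S^2)\big]=\sum_{m=0}^\infty\frac{\lambda^m}{m!}\,\ZE[S^{2m}]=\sum_{m=0}^\infty\frac{\lambda^m}{m!}\,\|S\|_{2m}^{2m}.
\]
For $m\ge 2$ the exponent satisfies $2m\ge 3$, so \e{x58} applies with $p=2m$ and, using $\Gamma(m+\tfrac12)=\sqrt\pi\,(2m)!/(4^mm!)$, yields $\|S\|_{2m}^{2m}\le 2^m\Gamma(m+\tfrac12)/\sqrt\pi=(2m)!/(2^mm!)=(2m-1)!!$. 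The case $m=1$ does not need \e{x58}: the orthogonality \e{x57} of the $d_k$ together with $|d_k|\le D_k$ gives $\ZE[S^2]=\sum_k\ZE[d_k^2]\le\ZE\big[\sum_kD_k^2\big]\le\|\ZS(d)\|_\infty^2=1=(2\cdot1-1)!!$, and the case $m=0$ is trivial. Hence
\[
\ZE\big[\exp(\lambda S^2)\big]\le\sum_{m=0}^\infty\frac{\lambda^m}{m!}(2m-1)!!=\sum_{m=0}^\infty\frac{\lambda^m}{m!}\,\ZE[g^{2m}]=\ZE\big[\exp(\lambda g^2)\big]=\frac{1}{\sqrt{1-2\lambda}},
\]
for $0<\lambda<1/2$, where $g$ denotes a standard Gaussian variable and the last two equalities are the classical identities for its even moments and for its moment generating function.

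For the sharpness, I would take the Rademacher martingale $d_k=r_k/\sqrt n$, so that $\ZS(d)\equiv 1$ and the left side of \e{x59} equals $\ZE\big[\exp\big(\lambda\,(n^{-1/2}\sum_{k=1}^nr_k)^2\big)\big]$. By the central limit theorem $n^{-1/2}\sum_{k=1}^nr_k\to g$ in distribution, while the uniform moment estimates supplied by Khintchine's inequality \e{x55} make the family $\exp\big(\lambda\,(n^{-1/2}\sum_{k=1}^nr_k)^2\big)$ uniformly integrable for each fixed $\lambda<1/2$; hence the left side of \e{x59} converges to $\ZE[\exp(\lambda g^2)]=(1-2\lambda)^{-1/2}$. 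Thus the constant cannot be improved, and since the Rademacher martingale is a special dyadic (Haar) martingale this optimality already holds within the dyadic and Rademacher subclasses, and a fortiori for general discrete martingale-differences.

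I do not expect a serious obstacle here: the two points that need a little care are the interchange of summation and expectation (handled by monotone convergence, since all summands are nonnegative) and the uniform integrability required to pass to the limit in the central limit theorem for the sharpness part. The real content is the coincidence that the best Khintchine constants $A_{2m}=2^{1/2}\big(\Gamma(m+\tfrac12)/\sqrt\pi\big)^{1/(2m)}$ coming from \e{x58}--\e{x65} reproduce exactly the moments $(2m-1)!!$ of $N(0,1)$, so that summing the Taylor series simply rebuilds the Gaussian moment generating function and pins down the constant $(1-2\lambda)^{-1/2}$.
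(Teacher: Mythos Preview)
Your argument is correct and follows the same route as the paper: expand the exponential as a power series, bound each even moment by the sharp Khintchine constant coming from \trm{T2}/\e{x58}, and recognize the resulting series as the binomial expansion of $(1-2\lambda)^{-1/2}$. You are in fact slightly more careful than the paper's write-up, treating the terms $m=0,1$ separately (since \e{x58} only applies for $p\ge 3$) and spelling out the uniform-integrability step needed to deduce sharpness from the central limit theorem.
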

Consider the Orlicz space $L^\psi$ of random variables, corresponding to the Young function $\psi(t)=e^{t^2}-1$, and equipped with the Luxembourg norm 
\begin{equation}
	\|f\|_\psi=\inf\{u>0:\, \ZE\left[\psi\left(f/u\right)\right]\le 1\}.
\end{equation}
Applying \e{x59} we can immediately get the following sharp bound.
\begin{corollary}\label{C4}
	If $\psi(t)=e^{x^2}-1$, then for any discrete martingale-difference \e{x56} it holds the sharp bound
	\begin{equation}\label{x60}
		\left\|\sum_{k=1}^nd_k\right\|_\psi \le \sqrt{\frac{8}{3}}\cdot \|\ZS(d)\|_\infty.
	\end{equation} 
\end{corollary}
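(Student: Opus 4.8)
The plan is to read off \e{x60} from the sub-Gaussian estimate \e{x59} of \cor{C3} by an elementary optimization, and then to verify sharpness via the central limit theorem. By homogeneity we may assume $\|\ZS(d)\|_\infty=1$; write $Y=\sum_{k=1}^nd_k$. By the definition of the Luxembourg norm it suffices to show that $\ZE[\psi(Y/u)]\le 1$, i.e.\ $\ZE[\exp(Y^2/u^2)]\le 2$, for every $u\ge\sqrt{8/3}$. Taking $\lambda=1/u^2$ in \e{x59} --- legitimate exactly when $u>\sqrt2$, which holds since $\sqrt{8/3}>\sqrt2$ --- gives $\ZE[\exp(Y^2/u^2)]\le(1-2/u^2)^{-1/2}$, and a one-line computation shows $(1-2/u^2)^{-1/2}\le 2$ precisely when $u^2\ge 8/3$. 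Hence $\|Y\|_\psi\le\sqrt{8/3}$.

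For sharpness I would take the Rademacher difference sequence $d_k=r_k/\sqrt n$, so that $\|\ZS(d)\|_\infty=1$ and $Y_n:=n^{-1/2}\sum_{k=1}^nr_k$. The classical central limit theorem gives $Y_n\Rightarrow Z$ with $Z$ a standard Gaussian, and for $Z$ one has the exact identity $\ZE[\exp(\lambda Z^2)]=(1-2\lambda)^{-1/2}$ for $0<\lambda<1/2$; in particular $\ZE[\exp(Z^2/u^2)]=2$ exactly at $u=\sqrt{8/3}$, so that $\|Z\|_\psi=\sqrt{8/3}$. To transfer this to the sequence, fix $u$ with $\sqrt2<u<\sqrt{8/3}$ and pick $\lambda'$ with $1/u^2<\lambda'<1/2$; by \e{x59}, $\sup_n\ZE[\exp(\lambda'Y_n^2)]<\infty$, which forces the family $\{\exp(Y_n^2/u^2)\}_n$ to be uniformly integrable. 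Together with $Y_n\Rightarrow Z$ this yields $\ZE[\exp(Y_n^2/u^2)]\to\ZE[\exp(Z^2/u^2)]>2$, so for all large $n$ one has $\ZE[\psi(Y_n/u)]>1$, whence $\|Y_n\|_\psi\ge u$ (using that $\psi$ is even and nondecreasing on $[0,\infty)$). Letting $u\uparrow\sqrt{8/3}$ gives $\liminf_n\|Y_n\|_\psi\ge\sqrt{8/3}$, and combined with the upper bound we get $\|Y_n\|_\psi\to\sqrt{8/3}$. As these are dyadic --- indeed Rademacher --- martingales, the constant $\sqrt{8/3}$ in \e{x60} is optimal even within those classes.

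The genuinely delicate point --- and the only real obstacle --- is the interchange of limit and expectation in the sharpness step: convergence in distribution alone does not pass through the unbounded, rapidly growing function $t\mapsto\exp(t^2/u^2)$, so one needs the uniform integrability supplied by the uniform-in-$n$ sub-Gaussian bound \e{x59}, and this is precisely the reason for working with a threshold $u$ strictly between $\sqrt2$ and $\sqrt{8/3}$ rather than at $\sqrt2$ itself. Everything else is routine: the reduction to $\ZE[\exp(Y^2/u^2)]\le 2$, the elementary monotonicity of $\psi$ used to relate the Luxembourg norm to the admissibility of a single threshold, the Gaussian moment computation, and the inequality $(1-2/u^2)^{-1/2}\le 2\iff u^2\ge 8/3$.
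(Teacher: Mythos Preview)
Your proof is correct and, for the upper bound, proceeds exactly as the paper does: normalize, apply \e{x59} with $\lambda=1/u^2$, and solve $(1-2/u^2)^{-1/2}\le 2$ to obtain $u\ge\sqrt{8/3}$. Where you go beyond the paper is in the sharpness argument: the paper's proof of \cor{C4} establishes only the inequality and leaves sharpness implicit (inherited from the sharpness asserted in \cor{C3}, itself not spelled out beyond the observation that Rademacher sums are extremal), whereas you supply a full CLT-plus-uniform-integrability argument showing $\|Y_n\|_\psi\to\sqrt{8/3}$; this is a genuine addition rather than a different route, and your handling of the limit interchange via the auxiliary exponent $\lambda'\in(1/u^2,1/2)$ is the right way to make that step rigorous.
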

\begin{remark}
	It was proved by Peskir \cite{Pes} the inequality
\begin{equation}\label{x63}
	\left\|\sum_{k=1}^nX_k\right\|_\psi \le \sqrt{\frac{8}{3}}\cdot \left(\sum_{k=1}^n\|X_k\|_\infty^2\right)^{1/2}
\end{equation}
for any sequence $X=\{X_k:\, k=1,2,\ldots,n\}$ of independent symmetric random variables. Inequality \e{x63} can be easily deduced from \e{x60}. Indeed, if $X$ is a discrete sequence, then it becomes a martingale-difference on a discrete filtration. So \e{x60} holds, since using independence we will have
\begin{equation}\label{x61}
	\ZS(X)=\left(\sum_{k=1}^n\|X_k\|_\infty^2\right)^{1/2}.
\end{equation}
The general case of \e{x63} may be reduced to the discrete case, applying a standard approximation argument.  
We refer also related papers  \cite{Whi, Ros}, where authors prove Khintchine type inequality for independent symmetric random variables, with other square functions in the spirit of \e{x63}.
\end{remark}
\begin{remark}
	Theorems \ref{T1} and \ref{T2} show certain extreme properties of Haar and Radema\-cher random variables in a class of martingale difference sequences, implying extensions of some properties of Rademacher system to general martingales. Some extreme properties of Rademacher system in a class of uniformly bounded martingale differences were considered in \cite{Kar4, Kar1, Kwa} (see also \cite{Ast}, chap. 9). Moreover, those papers consider more general sequences, namely multiplicative sequences of bounded random variables $\phi_n$, $n=1,2,\ldots$, i.e.  ${\textbf E}\left[\prod_{j\in A}\phi_{j}\right]=0$ for all nonempty finite subsets $A\subset \ZN$ of positive integers. In particular, the result of \cite{Kar4} states that if $G:{\mathbb R}^n\to {\mathbb R}^+$ is a function convex with respect to each variables and $\phi=\{\phi_k:\, k=1,2,\ldots, n\}$ is a  multiplicative system of random variables, satisfying  $A_k\le \phi_k(x)\le B_k$, then 
	\begin{equation}\label{a33}
		\textbf{E}\left[G\left(\phi_1,\ldots,\phi_n\right)\right]\le \textbf{E}\left[G\left(\xi_1,\ldots,\xi_n\right)\right],
	\end{equation}
	where $\xi_k$ are the $\{A_k,B_k\}$- valued independent mean zero random variables. The case of $A_k=-1$ and $B_k=1$ of inequality \e{a33} was proved in \cite{Kwa}.
\end{remark}
\begin{remark}
	It is a remarkable result of Wang \cite{Wang} that for any $p\ge 3$ and any conditionally symmetric martingale-difference $d=\{d_1,d_2,\ldots,d_n\}$ we have the sharp bound
	\begin{equation}
	\left\|\sum_{k=1}^nd_k\right\|_p\le R_p\|S(d)\|_p,\quad p\ge 3
	\end{equation}
	where $R_p$ is the rightmost zero of the Hermite function $H_p(x)$. Here $H_p(x)$ is the solution of
	the Hermite differential equation
	\begin{equation}
		H''_p-xH'_p+pH_p=0.
	\end{equation}
It is known that $R_p=O(\sqrt{p})$.
\end{remark}
\section{Proof of \trm{T1}}
Since a sequence of random variables may form a martingale difference with respect to different filtrations, the square function \e{x42} 
strongly depends on both random variables $d_k$ and the partitions (or filtration) $\ZD_k$. So if a sequence of functions  $d_1,\ldots,d_n$ form a martingale difference relative to an increasing sequence of partitions $\ZD_1,\ldots,\ZD_n$, then we say the collection 
	\begin{equation}\label{x41}
	d=\{d_1,\ldots,d_n|\ZD_1,\ldots,\ZD_n\},\quad 1\le k<n, 
\end{equation}
forms a MD-system (martingale difference system).  Denote by $\pr(A)$ the parent of a given element $A\in \ZD_j$, $j>1$, which is the unique element of $\ZD_{j-1}$, containing $A$. Set
\begin{equation}
	\|d\|_p=\left\|\sum_{j=1}^nd_k\right\|_p,\quad  \ZU(d)=\|d\|_p/\|\ZS(d)\|_\infty, \quad p\ge 1,
\end{equation}
where $\ZS(d)$ is the square function \e{x42}. Hence for the constant $A_{p,n}$ we have
\begin{equation}\label{x43}
	A_{p,n}=\sup_d\ZU(d),
\end{equation}
where $\sup$ is taken over all non-trivial MD-systems \e{x41}. One can take the $\sup$ in \e{x43} over MD-systems, satisfying certain property (P). In that case we will use the notation $A_{p,n}(P)$. We say that a MD-system \e{x41} is $k$-dyadic with $2\le k\le n$, if each $V\in \ZD_j$ with $1\le j<k$ has exactly two children in $\ZD_{j+1}$, and 1-dyadic if $d_1$ takes two values. The property $n$-dyadic is the same as dyadic defined in the introduction. Observe that to prove \trm{T1} it is enough to prove 
\begin{equation}\label{x44}
	A_{p,n}((k-1)-{\rm dyadic})=A_{p,n}(k-{\rm dyadic})\hbox{ for all }1< k\le n. 
\end{equation}
Introduce an intermediate property ({\rm IP}) for MD-system \e{x41} that requires

\vspace{3mm}
{\rm IP}) $(k-1)$-dyadic \& $|d_k(x)|=\|\ZI_V\cdot d_k\|_\infty,\quad x\in V\in \ZB_{k-1}$.
\vspace{3mm}

Hence, equality \e{x44} and so the theorem will be proved if we prove following two relations:

\begin{enumerate}
	\item [R1:]$A_{p,n}((k-1)-{\rm dyadic})=A_{p,n}({\rm IP})$,
	\item [R2:]$A_{p,n}({\rm IP})=A_{p,n}(k-{\rm dyadic})$.
\end{enumerate}
\begin{proof}[Proof of R1]
Without loss of generality we can suppose that $X=[0,1)$ and each partition $\ZD_k$ consists of intervals of the form $[a,b)$.  It is enough to prove that for any $(k-1)$-dyadic MD-system \e{x41} there exists a MD-system 
$\bar d$ with the {\rm IP}-property such that $\ZU(d)\le \ZU(\bar d)$. Let \e{x41} be a $(k-1)$-dyadic MD-system.  The desired MD-system will be obtained from $d$ by reconstructing only the $k$'th function $d_{k}$ and the partitions $\ZD_{k},\ldots, \ZD_n$. Namely, after these changes we will have a MD-system
\begin{equation}\label{x35}
		\bar d=\{d_1,\ldots,d_{k-1},\bar d_k,d_{k+1},\ldots,  d_n|\ZD_1,\ldots,\ZD_{k-1},\bar \ZD_k,\ldots, \bar \ZD_n\},
\end{equation}
which in any cases will be $(k-1)$-dyadic. The function $d_k$ is constant on each interval $J\in \ZD_k$, so we denote this constant by $\xi(J)$.
For $I=[a,b)\in \ZD_n$ consider the unique interval $\bar I\in \ZD_k$ such that $\bar I\supset I$. We have, $d_k(x)=\xi(\bar I)$ whenever $x\in \bar I$, and $|\xi(\bar I)|\le \|\ZI_{\pr(\bar I)}\cdot d_k\|_\infty$. Divide $I$ into two intervals $I^+$ and $I^-$, setting
	\begin{align}
		&\lambda=\lambda(\bar I)=\frac{1}{2}\left(1+\frac{\xi(\bar I)}{ \|\ZI_{\pr(\bar I)}\cdot d_k\|_\infty}\right)\in [0,1],\label{x13}\\
		&c=(1-\lambda) a+\lambda b\in [a,b],\label{x14}\\
		&I^+=[a,c),\quad I^-=[c,b).\label{x38}
	\end{align}
Using these notations, denote
	\begin{equation}\label{x34}
		\bar d_k(x)=\sum_{I\in\ZD_n}\|\ZI_{\pr(\bar I)}\cdot d_k\|_\infty\left(\ZI_{I^+}(x)-\ZI_{I^-}(x)\right).
	\end{equation}
The required properties of $\bar d_k$ are the followings:
\begin{align}
	& |\bar d_k(x)|=\|\ZI_V\cdot d_k\|_\infty\hbox{ whenever }x\in V\in \ZB_{k-1},\label{x67}\\
	&\int_I\bar d_k=\int_Id_k \hbox{ for any }I\in \ZD_n,\label{x16}
\end{align}
which easily follow from \e{x38} and \e{x34}.
Then define the partition $\bar \ZD_j$, $k\le j\le n$,  replacing each interval $V\in \ZD_j$ by two sets
	\begin{equation}\label{x26}
		V^+=\bigcup_{I\in \ZD_n,\,I\subset V}I^+,\quad V^-=\bigcup_{I\in \ZD_n,\,I\subset V}I^-.
	\end{equation}
Hence the construction of the elements of the MD-system \e{x35} is complete. Clearly, the new MD-system $\bar d$ is still $(k-1)$-dyadic. Thus, taking into account \e{x67}, we can say that $\bar d$ satisfies {\rm IP}-property. Let us prove that \e{x35} is a MD-system, i.e. the functions in \e{x35} form a martingale difference with respect to the partitions of \e{x35}. We need to check the martingale difference property only for the elements $\bar d_k$ and $d_j$ with $j>k$. 
Applying \e{x16}, for any $V\in \ZD_{k-1}$ we obtain
	\begin{equation}
		\int_V\bar d_k=\sum_{ I\in \ZD_n,\,I\subset V}\int_I \bar d_k=\sum_{I\in \ZD_n,\,I\subset V}\int_I d_k=\int_Vd_k=0
	\end{equation}
that means $\ZE(\bar d_k|\ZD_{k-1})=0$. To show $\ZE(d_j|\bar \ZD_{j-1})=0$ for $j>k$ choose an element $V^+\in \bar \ZD_{j-1}$ generated by a $V\in \ZD_{j-1}$ (see \e{x26}). If $I\in \ZD_n$ and $I\subset V$, then we have $V\subseteq \bar I\in \ZD_k$. We have $V\subseteq J$ for some $J\in \ZD_k$. By \e{x13} and \e{x14},  we have $|I^+|=\lambda |I|$ with a common parameter $\lambda=\lambda(\bar I)$. Thus, since the functions $d_j$ are constant on each interval $I\in \ZD_n$, using \e{x26}, for $j>k$ we can write
	\begin{equation*}
		\int_{V^+} d_j=\sum_{I\in \ZD_n,\,I\subset V} \int_{I^+}d_j=\lambda\sum_{I\in \ZD_n,\, I\subset V} \int_{I}d_j=\lambda \int_Vd_j=0,
	\end{equation*}
where the latter follows from the martingale-difference property of the initial MD-system $d$. Similarly, $\int_{V^-} d_j=0$. This implies $\ZE(d_j|\bar \ZD_{j-1})=0$ and so \e{x35} is a MD-system. By \e{x16} we can write
\begin{equation}
	\int_I \left(\bar d_k+\sum_{j\ne k}^nd_j\right)= \int_I\sum_{j=1}^nd_j,\quad I\in \ZD_n.
\end{equation}
Thus, since the functions $d_j$ are constant on $I$, applying Jessen's inequality, we get the bound
	\begin{equation}\label{a18}
		\|\bar d\|_p^p=\sum_{I\in \ZD_n}\int_I\left|\bar d_k+\sum_{j\neq k} d_j\right|^p\ge\sum_{I\in \ZD_n} \int_I\left|\sum_{j=1}^n d_j\right|^p=\|d\|_p^p,
	\end{equation}
By the definition of the square function, both $\ZS \bar d(x)$ and $\ZS d(x)$ are square roots of $n$-term sums. We will show that the corresponding terms in these sums coincide. The equality of first $k-1$ terms in these sums is immediate, since the first $k-1$ elements of MD-systems $d$ and $\bar d$ coincide. For the  terms with indexes $j\ge k$ we have the following. For any $x\in [0,1)$ there is a unique sequence $V_j\in \ZD_j$, $j=1,2,\ldots,n$ such that $x\in V_j$. For an appropriate choice of signs $\varepsilon_j=\pm $ we will also have $x\in \cap_{k\le j\le n}V_j^{\varepsilon_j}$. Applying the definition of $\bar \ZD_j$, $j\ge k$, one can check,
	\begin{align}
		&\|\ZI_{V_{j-1}}d_j\|_\infty=\|\ZI_{V_{j-1}^{\varepsilon_j}}d_j\|_\infty,\quad j>k,\\
		&\|\ZI_{V_{k-1}}\bar d_k\|_\infty=\|\ZI_{V_{k-1}} d_k\|_\infty (\hbox{see }\e{x67}).
	\end{align}
	Thus we obtain equality of all the terms in the sums of $\ZS \bar d(x)$ and $\ZS d(x)$. So we get $\ZS \bar d(x)=\ZS d(x)$ everywhere. Combining this with \e{a18}, we obtain $\ZU(d)\le \ZU(\bar d)$, completing the construction of \e{x35} and the proof of relation R1.
\end{proof}
\begin{proof}[Proof of R2]
	Now suppose that \e{x41} satisfies the intermediate-property {\rm IP}. We need to get a $k$-dyadic MD-system $\bar d$ such that $\ZU(d)\le \ZU(\bar d)$. The only lack in the properties of $d$ is that some sets in $\ZD_{k-1}$ may have more that two children. Starting our construction, we will replace $d_j$ by $\bar d_j$ for $j>k$ and $\ZD_j$ by $\bar \ZD_j$ for $j\ge k$. Thus our new MD-system will look like
	\begin{equation}\label{x39}
		\bar {d}=\{d_1,\ldots,d_{k-1},d_k,\bar d_{k+1}\ldots,  \bar d_n|\ZD_1,\ldots,\ZD_{k-1},\bar {\ZD}_k,\ldots, \bar{ \ZD}_n\}.
	\end{equation}
For any $V\in \ZD_{k-1}$ consider two sets
\begin{equation}
	V^\pm=\{x\in X:\,d_k(x)=\pm \|\ZI_V\cdot d_k\|_\infty\},
\end{equation}
which we define as child sets of the element $V$ in $\bar {\ZD}_k$. Hence,
\begin{equation}
	\bar \ZD_k=\{V^\pm:\, V\in \ZD_{k-1}\}.
\end{equation}
If we stop our construction here the sets of $\ZD_{k-1}$ would exactly have two children in $\bar {\ZD}_k$ and we will have $\|d\|_p=\|\bar d\|_p$ (that is good), but the square function of a new MD-system $\bar d$ can be bigger than that of $d$ (which is bad). So we have to continue the reconstruction to fix the problem. Suppose for a fixed $V\in \ZD_{k-1}$ the integrals
	\begin{align}\label{x36}
		&\frac{1}{|J|}\int_{J}\left|\sum_{j=1}^n d_j\right|^p,\quad 	J\in \ZD_k,\,  J\subset V^+,\\
		&\frac{1}{|J|}\int_{J}\left|\sum_{j=1}^n d_j\right|^p,\quad J\in \ZD_k,\,  J\subset V^-,
	\end{align}
attains their maximum for the intervals $J=J_V^+$ and $J=J_V^-$ respectively.  Let $\Lambda_{I\to J}$ denote the dilation from $[0,1)$ to $[0,1)$, which is a linear mapping from an interval $I\subset [0,1)$ onto an interval $J\subset [0,1)$. We define the elements $\bar{d}_j(x)$ and $\bar{\ZD}_j$ for $j>k$ on the intervals $J\in \ZD_k$, $J\subset V^\pm$ making dilation of those elements from $J_V^\pm$ onto each such $J$. Namely,
\begin{align}
	&\bar{d}_j(x)=\sum_{V\in \ZB_{k-1}}\bigg(\sum_{J\in \ZD_k:\,  J\subset V^+}d_j(\Lambda_{J\to J_V^+}(x))\cdot \ZI_{J}(x)\\
	&\qquad\qquad\qquad\qquad+\sum_{J\in \ZD_k:\,  J\subset V^-}d_j(\Lambda_{J\to J_V^-}(x))\cdot \ZI_{J}(x)\bigg),\quad  j>k,\\
	&\bar{\ZD}_j^+=\bigg\{\bigcup_{J\in \ZD_k:\,  J\subset V^+}\Lambda_{J\to J_V^+}^{-1}(I):\,V\in \ZD_{k-1},\,I\in \ZD_j,\, I\subset J_V^+\bigg\},\\
	&\bar{\ZD}_j^-=\bigg\{\bigcup_{J\in \ZD_k:\,  J\subset V^-}\Lambda_{J\to J_V^-}^{-1}(I):\,V\in \ZD_{k-1},\,I\in \ZD_j,\, I\subset J_V^-\bigg\},\\
	&\bar{\ZD}_j=\bar{\ZD}_j^+\cup \bar{\ZD}_j^-,\quad j>k.
\end{align}
 Hence the elements of \e{x39} have been already defined. One can check that 
$\bar{d}$ is $k$-dyadic MD-system and satisfies $\ZS(\bar d)\le \ZS(d)$. Using the maximal property of the intervals $J_V^\pm$, we also have $\|d\|_p\le \|\bar d\|_p$. Thus, $\ZU(d)\le\ZU(\bar d)$. This completes the proof of R2-relation and so the proof of \trm{T1}.

\end{proof}

\section{Proof of \trm{T2}}
Lemmas \ref{L3} and \ref{L4} below were proved in \cite{Kom} and \cite{Whi} respectively. For the completeness we will sate the proofs of those lemmas. The proof of \lem{L3} is taken from \cite{Kom}. For \lem{L4} an alternative proof is given.  
\begin{lemma}[\cite{Kom}]\label{L3}
	For a fixed $\xi>0$ and $p>3$ the function 
	\begin{equation}\label{x28}
		u(t)=\frac{|t+\xi|^{p-1}\cdot \sign(t+\xi)+|t-\xi|^{p-1}\cdot \sign(t-\xi)}{t}
	\end{equation}
is increasing over $t>0$. 
\end{lemma}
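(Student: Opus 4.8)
The plan is to show directly that $u'(t)>0$ for all $t>0$. First I would normalize $\xi=1$: the substitution $t=\xi s$ turns $u$ into $\xi^{p-2}$ times the same function with $\xi$ replaced by $1$, so monotonicity in $t$ is scale invariant. Writing $\phi(s)=|s|^{p-1}\sign(s)=s|s|^{p-2}$, we have $t\,u(t)=\phi(t+1)+\phi(t-1)$, and since $p>3$ the function $\phi$ is $C^1$ with $\phi'(s)=(p-1)|s|^{p-2}$; hence $u\in C^1(0,\infty)$ and
\[
t^2u'(t)=t\big[\phi'(t+1)+\phi'(t-1)\big]-\big[\phi(t+1)+\phi(t-1)\big]=:N(t).
\]
A one-line computation collapses this to
\[
N(t)=\big[(p-2)t-1\big]\,|t+1|^{p-2}+\big[(p-2)t+1\big]\,|t-1|^{p-2},
\]
so everything reduces to proving $N(t)>0$ on $(0,\infty)$.

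The easy regime is $t\ge 1/(p-2)$: there $(p-2)t-1\ge 0$ and $(p-2)t+1>0$, and for $t\ge 1$ one even has $(p-2)t-1\ge p-3>0$; in all these cases both summands of $N(t)$ are nonnegative and at least one is strictly positive. (This is exactly where the hypothesis $p>3$ rather than $p>2$ is used: it gives $p-3>0$ and $1/(p-2)<1$.) The only genuine case is $0<t<1/(p-2)$, where $|t\pm1|=1\pm t$ and $1-(p-2)t>0$; there $N(t)>0$ is equivalent, after dividing through by the positive quantities $(1+t)^{p-2}$ and $1+(p-2)t$, to
\[
\left(\frac{1-t}{1+t}\right)^{p-2}>\frac{1-(p-2)t}{1+(p-2)t}.
\]

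I expect this last inequality to be the main obstacle, since it compares two unlike powers; the device I would use is to pass to logarithms. Set $g(t)=(p-2)\ln\frac{1-t}{1+t}-\ln\frac{1-(p-2)t}{1+(p-2)t}$ on $[0,1/(p-2))$. Then $g(0)=0$ and, after cancellation, the derivative simplifies to
\[
g'(t)=2(p-2)\left[\frac{1}{1-(p-2)^2t^2}-\frac{1}{1-t^2}\right],
\]
which is strictly positive on $(0,1/(p-2))$ because $p-2>1$ forces $(p-2)^2t^2>t^2$, hence $0<1-(p-2)^2t^2<1-t^2$. Therefore $g>0$ on $(0,1/(p-2))$, which gives the required inequality and completes the proof. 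The only routine parts are the two elementary algebraic computations of $N(t)$ and of $g'(t)$; the single real idea is the logarithmic substitution, which turns the comparison of two unlike powers into a monotonicity statement with an almost trivial derivative.
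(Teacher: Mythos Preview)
Your argument is correct: the normalization, the formula for $N(t)=t^2u'(t)$, the case split at $t=1/(p-2)$, and the logarithmic trick for the small-$t$ range all go through. One tiny quibble: you say the hypothesis $p>3$ is used ``exactly'' in the easy regime, but in fact you invoke $p-2>1$ again in the hard case when comparing $(p-2)^2t^2$ with $t^2$; this is harmless, just worth noting.

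The paper reaches the same intermediate expression for $N(t)$ but then finishes more quickly: instead of splitting into cases and taking logarithms, it simply differentiates $N$ once more and observes that
\[
N'(t)=t(p-1)(p-2)\big(|t+\xi|^{p-3}\sign(t+\xi)+|t-\xi|^{p-3}\sign(t-\xi)\big)
\]
is positive for $t>0$ (since $t+\xi>|\xi-t|\ge0$ and $p-3>0$), while $N(0)=0$; hence $N(t)>0$ on $(0,\infty)$. So the paper's route is a second differentiation, yours is a case analysis plus a log-comparison. Both are valid; the paper's is shorter and avoids the split, while your approach makes the role of the threshold $t=1/(p-2)$ explicit and would adapt more readily if one wanted quantitative lower bounds on $u'$.
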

\begin{proof}
For the derivative of function \e{x28} we have 
\begin{equation}\label{x81}
	u'(t)=\frac{|t+\xi|^{p-2}((p-2)t-\xi)+|t-\xi|^{p-2}((p-2)t+\xi)}{t^2}.
\end{equation}
The derivative of the numerator of \e{x81} is equal
\begin{equation*}
	t(p-1)(p-2)\left(|t+\xi|^{p-3}\sign(t+\xi)+|t-\xi|^{p-3}\sign(t-\xi)\right),
\end{equation*}
which is positive if $t>0$. Since this numerator is zero at $t=0$ and its derivative is greater than zero we get $u'(t)\ge 0$.
\end{proof}

\begin{lemma}[\cite{Whi}]\label{L4}
	Let $p>3$ and $\xi\in \ZR$ be a fixed real number. Then the sum
	\begin{equation}\label{x1}
		|x+y+\xi|^p+|x+y-\xi|^p+|y-x+\xi|^p+|y-x-\xi|^p
	\end{equation}
	where $x,y$ satisfy $x^2+y^2=r^2$, attains its maximum if and only if $|x|=|y|=r/\sqrt{2}$.
\end{lemma}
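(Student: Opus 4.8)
The plan is to reduce the constrained maximization in \lem{L4} to a one–dimensional convexity statement by means of the linear change of variables $u=x+y$, $v=y-x$. Under this substitution the constraint $x^2+y^2=r^2$ becomes $u^2+v^2=2r^2$, and the sum in \e{x1} equals $G(u)+G(v)$, where
\[
	G(t)=|t+\xi|^p+|t-\xi|^p .
\]
Since \e{x1} is unchanged under $\xi\mapsto-\xi$, we may assume $\xi\ge 0$; the degenerate cases $r=0$ and $\xi=0$ (where $G(t)=2|t|^p$ and $a\mapsto 2a^{p/2}$ is strictly convex because $p/2>1$) are handled directly, so below $\xi>0$ and $r>0$.

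The key claim is that $\phi(a):=G(\sqrt a)=|\sqrt a+\xi|^p+|\sqrt a-\xi|^p$ is \emph{strictly} convex on $(0,\infty)$. Setting $\psi(t):=G'(t)/t$, a direct differentiation gives $\phi'(a)=\tfrac12\psi(\sqrt a)$ and hence $\phi''(a)=\psi'(\sqrt a)/(4\sqrt a)$, so strict convexity of $\phi$ on $(0,\infty)$ is equivalent to $\psi$ being strictly increasing there. Now $\psi(t)=p\,u(t)$, with $u$ the function of \lem{L3}, and the proof of \lem{L3} in fact yields the \emph{strict} inequality $u'(t)>0$ for every $t>0$ when $p>3$: the numerator $N(t)$ of $u'(t)$ vanishes at $t=0$ and has derivative $t(p-1)(p-2)\bigl(|t+\xi|^{p-3}\sign(t+\xi)+|t-\xi|^{p-3}\sign(t-\xi)\bigr)$, which is strictly positive for $t>0$ because $t+\xi>|t-\xi|$ and $s\mapsto s^{p-3}$ is strictly increasing; thus $N(t)>0$, i.e. $u'(t)>0$, for all $t>0$.

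Granting the claim, put $a=u^2$ and $b=v^2$, so that $a,b\ge 0$, $a+b=2r^2$, and the quantity to be maximized equals $\phi(a)+\phi(2r^2-a)=:F(a)$ on $[0,2r^2]$. Since $\phi$ is continuous on $[0,2r^2]$ and strictly convex on $(0,2r^2)$, the function $F$ is continuous on $[0,2r^2]$ and strictly convex on $(0,2r^2)$, and therefore attains its maximum only at the endpoints $a=0$ and $a=2r^2$. The endpoint $a=0$ forces $u=0$, $v^2=2r^2$, i.e. $x=-y$ and $|x|=|y|=r/\sqrt2$; the endpoint $a=2r^2$ forces $v=0$, $u^2=2r^2$, i.e. $x=y$ and $|x|=|y|=r/\sqrt2$. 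Conversely, if $|x|=|y|=r/\sqrt2$ then $\{u^2,v^2\}=\{0,2r^2\}$, so $a\in\{0,2r^2\}$. Hence \e{x1} attains its maximum precisely when $|x|=|y|=r/\sqrt2$.

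The only genuine obstacle is the strict convexity of $\phi$, that is, upgrading \lem{L3} to the \emph{strict} monotonicity of $u$; everything else is routine manipulation. This is exactly where the hypothesis $p>3$ is used: for $p=3$ one checks $N\equiv 0$ on $[0,\xi]$, so $\phi$ is affine there and the maximizer is no longer unique.
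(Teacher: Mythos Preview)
Your argument is correct and rests on the same engine as the paper's: both reduce everything to the (strict) monotonicity of $u(t)$ from \lem{L3}. The paper parametrizes the circle directly and factors the derivative as $f'(x)=\frac{p(y^2-x^2)}{y}\bigl(u(y+x)-u(y-x)\bigr)$, whereas your substitution $(u,v)=(x+y,\,y-x)$ splits the sum as $G(u)+G(v)$ on $u^2+v^2=2r^2$ and recasts the same computation as strict convexity of $a\mapsto G(\sqrt a)$ (since $\phi''(a)=\tfrac{p}{4\sqrt a}\,u'(\sqrt a)$); this packaging makes the symmetries and the ``only if'' part a bit cleaner, but the analytic content is identical.
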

\begin{proof}

	Without loss of generality we can suppose that $\xi\ge 0$, $r=1$ and $y\ge 1/\sqrt{2}\ge x>0$. Let $f(x)$ denote the function \e{x1}  after substitution  $y= \sqrt{1-x^2}$. It is enough to prove that $f(x)$ is increasing on the interval $(0,1/\sqrt{2})$. Applying \lem{L4} and 
	\begin{equation*}
		(|x|^p)'=p|x|^{p-1}\sign x, \quad p>1,
	\end{equation*}
	for the derivative of $f(x)$ at a point $x\in (0,1/\sqrt 2)$ we get
	\begin{align}
		f'(x)&=p\left(1-\frac{x}{y}\right)\bigg(|x+y+\xi|^{p-1}\sign(x+y+\xi)\\
		&\qquad\qquad+|x+y-\xi |^{p-1}\sign(x+y-\xi)\bigg)\\
		&+p\left(1+\frac{x}{y}\right)\bigg(|y-x+\xi|^{p-1}\sign(y-x+\xi)\\
		&\qquad\qquad+|y-x-\xi |^{p-1}\sign(y-x-\xi)\bigg)\\
		&=\frac{p(y-x)(y+x)}{y}(u(y+x)-u(y-x))>0,
	\end{align}
	where $u(t)$ is the function in \e{x28}. Thus, applying \lem{L3}, we get $f(x)$ is increasing over $(0,1/\sqrt 2)$.
\end{proof}
Applying Lemmas \ref{L4} consecutively, we get 
\begin{lemma}\label{L6}
	If $p>3$ and $\xi\in \ZR$ are fixed, then the value
	\begin{equation}
		\left\|\xi+\sum_{k=1}^na_kr_k\right\|_p
	\end{equation}
	where the Rademacher coefficients $a_k$ satisfy $\sum_{k=1}^na_k^2=r^2$, attains its maximum if and only if $|a_k|=r/\sqrt{n}$.
\end{lemma}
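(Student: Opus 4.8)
The plan is to deduce this $n$-variable statement from the two-variable \lem{L4} by a standard pairwise equalization (symmetrization) argument applied to the expansion of the $L^p$ norm over the $2^n$ sign patterns. First note that, since the $r_k$ are independent and symmetric, the quantity only depends on the moduli $|a_k|$ and
\[
	\left\|\xi+\sum_{k=1}^na_kr_k\right\|_p^p=2^{-n}\sum_{\varepsilon\in\{-1,1\}^n}\left|\xi+\sum_{k=1}^na_k\varepsilon_k\right|^p .
\]
The sphere $\{(a_1,\ldots,a_n):\,\sum_ka_k^2=r^2\}$ is compact and the left-hand side is continuous in $(a_1,\ldots,a_n)$, so the maximum is attained. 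By the sign symmetry just noted, it suffices to show that any maximizer has $|a_1|=\cdots=|a_n|$, for then necessarily $|a_k|=r/\sqrt n$, and conversely every such point yields the same (maximal) value.

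Next I would argue by contradiction. Suppose $(a_1,\ldots,a_n)$ is a maximizer with $|a_i|\neq|a_j|$ for some $i\neq j$. Freeze all coordinates $a_k$ with $k\notin\{i,j\}$, and group the sum over sign patterns according to $(\varepsilon_k)_{k\notin\{i,j\}}$: for each such choice set $\eta=\xi+\sum_{k\notin\{i,j\}}a_k\varepsilon_k$ and collect the four terms corresponding to $\varepsilon_i,\varepsilon_j\in\{-1,1\}$. A direct inspection gives
\[
	\sum_{\varepsilon_i,\varepsilon_j}\bigl|\eta+a_i\varepsilon_i+a_j\varepsilon_j\bigr|^p=|a_i+a_j+\eta|^p+|a_i+a_j-\eta|^p+|a_j-a_i+\eta|^p+|a_j-a_i-\eta|^p,
\]
which is exactly the expression \e{x1} of \lem{L4} with $x=a_i$, $y=a_j$ and the parameter there equal to $\eta\in\ZR$. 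By \lem{L4}, under the constraint $a_i^2+a_j^2=\rho^2$ fixed, this grouped sum attains its maximum only when $|a_i|=|a_j|=\rho/\sqrt2$; since $|a_i|\neq|a_j|$, replacing $(a_i,a_j)$ by $(\rho/\sqrt2,\rho/\sqrt2)$ strictly increases it. Summing these inequalities over all $2^{n-2}$ choices of $(\varepsilon_k)_{k\notin\{i,j\}}$ and dividing by $2^n$ strictly increases $\|\xi+\sum_ka_kr_k\|_p^p$ while keeping $\sum_ka_k^2=r^2$, contradicting maximality. Hence every maximizer has all $|a_k|$ equal, which proves the lemma.

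The substantive content — that the two-variable quantity is maximized at the balanced point — is precisely \lem{L4}, so no genuinely hard step remains. The only points requiring care are the bookkeeping that identifies each grouped four-term sum with \e{x1}, and the use of the "if and only if" clause of \lem{L4} (its uniqueness statement) to obtain the \emph{strict} increase needed to close the contradiction and to deduce the "only if" part of the present lemma. Equivalently, one may phrase the argument as applying \lem{L4} consecutively to pairs of indices, each application preserving $\sum_ka_k^2$ and not decreasing the norm, until all moduli are equalized.
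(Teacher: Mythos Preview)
Your argument is correct and is exactly the approach the paper takes: the paper's proof of \lem{L6} is the single line ``Applying \lem{L4} consecutively, we get \lem{L6},'' and your proposal simply spells out that pairwise equalization in detail, including the compactness step for existence and the use of the ``only if'' clause of \lem{L4} to obtain the strict increase that forces uniqueness.
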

\begin{lemma}\label{L8}
	Let $p\ge 1$and $\xi\in \ZR$ be fixed. Then for any non-trivial function $g\in L^1(0,1)$, satisfying $\ZE(g)=0$, the function
	\begin{equation}
		h(x)=\int_0^1\left|\xi+x\cdot g(t)\right|^pdt
	\end{equation}
is increasing over $(0,\infty)$.
\end{lemma}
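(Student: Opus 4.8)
The plan is to read off the result from two soft facts: that $h$ is a convex function of $x\in\ZR$, and that $x=0$ is a global minimum point of $h$. A convex function is automatically non-decreasing to the right of any of its minimizers, and that is precisely the assertion — recall that, as already in \lem{L3}, \emph{increasing} here means non-decreasing.

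First I would note that for each fixed $t$ the map $x\mapsto|\xi+xg(t)|^p$ is convex on $\ZR$, being the composition of the convex function $u\mapsto|u|^p$ (here $p\ge1$) with an affine map; integrating in $t$ preserves convexity, so $h$ is convex as a $[0,+\infty]$-valued function (if $p>1$ and $g\notin L^p(0,1)$ then $h\equiv+\infty$ on $\ZR\setminus\{0\}$ and the statement is trivial, so we may assume $g\in L^p$ and $h$ finite). Next, applying Jensen's inequality to $u\mapsto|u|^p$ and using $\ZE(g)=0$,
\[
 h(x)=\int_0^1|\xi+xg(t)|^p\,dt\ \ge\ \Big|\int_0^1\big(\xi+xg(t)\big)\,dt\Big|^p=|\xi+x\,\ZE(g)|^p=|\xi|^p=h(0),
\]
so $0$ is a global minimizer of $h$. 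Then for $0\le x_1<x_2$ write $x_1=(1-\theta)\cdot0+\theta x_2$ with $\theta=x_1/x_2\in[0,1)$; convexity and $h(0)\le h(x_1)$ give $h(x_1)\le(1-\theta)h(0)+\theta h(x_2)\le(1-\theta)h(x_1)+\theta h(x_2)$, hence $h(x_1)\le h(x_2)$ (trivially if $x_1=0$, and upon cancelling $\theta>0$ otherwise). This shows $h$ is non-decreasing on $[0,\infty)$, in particular on $(0,\infty)$.

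There is no genuinely hard step; the only points requiring care are the possible infiniteness of $h$ when $p>1$ and $g\notin L^p$ (disposed of above) and the exact meaning of \emph{increasing}. For $p=1$ strict monotonicity really fails — e.g.\ with $\xi=1$ and any non-trivial mean-zero $g\ge-\tfrac12$ one has $h\equiv1$ on $(0,2]$ — so ``non-decreasing'' is the sharp formulation, and the convexity argument delivers exactly that. For $p>1$ the Jensen step above is strict unless $\xi+xg(t)$ is a.e.\ constant, which together with $\ZE(g)=0$ forces $g\equiv0$, contradicting non-triviality; hence $h(x)>h(0)$ for all $x\ne0$, and combined with convexity this upgrades the conclusion to strict monotonicity of $h$ on $(0,\infty)$.
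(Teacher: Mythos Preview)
Your proof is correct and considerably more streamlined than the paper's. The paper argues constructively: it reduces to step functions with equal-length constancy intervals and then builds a finite chain $g=g_1,g_2,\ldots,g_m=rg$ by repeatedly pushing one positive value up and one negative value down by the same amount, invoking at each step the elementary fact that $x\mapsto|c+x|^p+|c-x|^p$ is non-decreasing on $[0,\infty)$. Your route bypasses the approximation and the chain entirely: convexity of $u\mapsto|u|^p$ composed with an affine map and integrated in $t$ gives convexity of $h$; Jensen together with $\ZE(g)=0$ pins the global minimum at $x=0$; and a convex function is automatically monotone to the right of a minimizer. This is shorter, needs no discretisation, and makes the mechanism transparent. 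Your remark about $p=1$ is also apt: the paper's proof is written with strict inequalities throughout, but its key numerical step is \emph{not} strict when $p=1$ (the map $x\mapsto|c+x|+|c-x|$ is constant on $[0,|c|]$), so only the non-decreasing conclusion survives there, in agreement with your counterexample; this causes no trouble downstream since the lemma is applied only for $p\ge 3$ in Procedure~1 of the proof of \trm{T2}.
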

\begin{proof}
Without loss of generality it is enough to prove that 
\begin{equation}\label{x52}
	\int_0^1\left|\xi+g(t)\right|^pdt<\int_0^1\left|\xi+r\cdot g(t)\right|^pdt\hbox{ for }r>1.
\end{equation}
Applying an approximation, we can suppose that $g$ is a step function on $[0,1)$ with equal constancy intervals. Let 
\begin{equation}
	E^+=\{x\in [0,1):\, g(x)\ge 0\},\quad E^-=\{x\in [0,1):\, g(x)<0\}.
\end{equation}
We will construct a sequence of step functions $g=g_1, g_2,\ldots, g_m=r\cdot g$, with the same constancy intervals as $g$ has, such that  
\begin{align}
	&\ZE(g_k)=0,\label{x50}\\
	&0\le g_k\cdot \ZI_{E^+}\le g_{k+1}\cdot \ZI_{E^+}\le rg(x)\cdot \ZI_{E^+},\label{x51}\\ 
	&0\ge  g_k\cdot \ZI_{E^-}\ge g_{k+1}\cdot \ZI_{E^-}\ge rg(x)\cdot \ZI_{E^-},\label{x48}\\
	&\int_X\left|\xi+g_k(t)\right|^pdt< \int_X\left|\xi+g_{k+1}(t)\right|^pdt,\quad k=1,2,\ldots, m-1.\label{x49}
\end{align}
Suppose by induction we have already defined the functions $g_1,\ldots, g_l$ such that the above conditions hold for $k=1,\ldots, l-1$. If $g_l=rg$, then the process will stop. Otherwise, applying \e{x50}, one can say that $g_l$ has at least two constancy intervals $I^+\subset E^+$ and $I^-\subset E^-$ such that
\begin{align}
	&rg(x)=b_+>g_l(x)=a_+\ge 0,\, x\in I^+,\\
	 &rg(x)=b_-<g_l(x)=a_-<0,\, x\in I^-.
\end{align}
Denote 
\begin{equation}
	\lambda=\min\{b_+- a_+, a_--b_-\}>0.
\end{equation}
We define $g_{l+1}$ by changing the values of $g_l$, on the intervals $I^+$ and $I^-$ by $a_++\lambda$ and $a_--\lambda$ respectively. Clearly, the conditions \e{x50}, \e{x51} and \e{x48} are satisfied for $k=l$ too. To show \e{x49} it is enough to observe the inequality
\begin{equation*}
	\int_{I^+\cup I^-}\left|\xi+g_l(t)\right|^pdt\le \int_{I^+\cup I^-}\left|\xi+g_{l+1}(t)\right|^pdt,
\end{equation*}
which is the same as the numerical inequality
\begin{equation*}
	|\xi+a_+|^p+|\xi+a_-|^p<|\xi+a_++\lambda|^p+|\xi+a_--\lambda|^p.
\end{equation*}
The latter follows from the fact that the function $t(x)=|c+x|^p+|c-x|^p$ is increasing when $x>0$. After this step of induction we will get one more interval (either $I^+$ or $I^-$), where $g_{l+1}$ coincides with $rg$. Thus, continuing the induction we will finally get a function $g_m=rg$. Thus, we will have \e{x52}, completing the proof of lemma.
\end{proof}
\begin{proof}[Proof of \trm{T2}]
 Recall that $\ZD_0=\{X\}$. We say a dyadic MD-system \e{x41} is $m$-Rademacher, $1\le m\le n$,  if for any $V\in \ZD_{m-1}$ we have
\begin{equation}\label{x68}
	|d_{m}(x)|=|d_{m+1}(x)|=\ldots=|d_n(x)|=c,\quad x\in V. 
\end{equation}
Here $n$-Rademacher is nothing but to be dyadic, while $1$-Rademacher property requires 
\begin{equation}
|d_1(x)|=|d_2(x)|=\ldots =|d_n(x)|=c,\quad x\in X.
\end{equation}
We will first prove that 
\begin{equation}\label{x46}
	A_{p,n}(m-{\rm Rademacher})=A_{p,n}((m-1)-{\rm Rademacher}), \quad 1<m\le n.
\end{equation}
To this end it is enough to prove that for any $m$-Rademacher system $d$ there exists a $(m-1)$-Rademacher system $\bar d$ such that $\ZU(d)\le \ZU(\bar d)$.  
We apply two procedures inside arbitrary $V\in \ZD_{m-2}$ , changing the values of some functions $d_k$ on $V$.  

{\it Procedure 1: }Since our MD-system is dyadic, $V\in \ZD_{m-2}$ has exactly two children $V^+,V^-\in \ZD_{m-1}$ with $\mu(V^+)=\mu(V^-)=\mu(V)/2$.
According to the property being $m$-Rademacher we have
\begin{align}
	&|d_{m}(x)|=|d_{m+1}(x)|=\ldots=|d_n(x)|=c_+,\quad x\in V^+,\\
	&|d_{m}(x)|=|d_{m+1}(x)|=\ldots=|d_n(x)|=c_-,\quad x\in V^-,\\
	&|d_k(x)|=c_k,\quad x\in V,\quad 1\le k<m.
\end{align}
We have
\begin{align}
	&(\ZS d(x))^2=\sum_{j=1}^{m-1}c_k^2+(n-m+1)(c_+)^2,\quad x\in V^+,\\
	&(\ZS d(x))^2=\sum_{j=1}^{m-1}c_k^2+(n-m+1)(c_-)^2,\quad x\in V^-.
\end{align}
Without loss of generality we can suppose that $c_-\le c_+$. In this case we change the values of functions $d_m,d_{m+1},\ldots,d_n$ on the set $V^-$ just multiplying those by $c_+/c_-$. One can check that after this change the $L^\infty$-norm of the square function $\ZS(d)$ will not be changed, but $\|d\|_p$ will increase because of \lem{L8}. Therefore $\ZU(d)$ increases. After these changes we will get
\begin{equation}\label{x83}
	|d_m(x)|=\ldots= |d_n(x)|=c_V \hbox{ for every }x\in V,
\end{equation}
where $c_V$ is either $c_+$ or $c_-$.

{\it Procedure 2: } So after Procedure-1 we will have \e{x83} for any $V\in \ZD_{m-2}$. If along with \e{x83} we also had  
\begin{equation}\label{x69}
	|d_{m-1}(x)|=c_V
\end{equation}
with the same constant, then we could stop our procedure. Perhaps \e{x69} may fail. One can define functions $\bar d_{m-1}, \bar d_m,\ldots, \bar d_n$ such that the $\{d_1,\ldots, d_{n-2},\bar d_{m-1}, \bar d_m,\ldots, \bar d_n\}$ forms a dyadic MD-system with respect to the same partitions $\ZD_j$ and on each $V\in \ZD_{m-2}$ we have
\begin{align}
&|\bar d_{m-1}(x)|= |\bar d_m(x)|=\ldots= |\bar d_n(x)|=\constant_V,\quad x\in V,\\
&\sum_{k=m-1}^n|\bar d_k(x)|^2=\sum_{k=m-1}^n| d_k(x)|^2,\quad x\in V.
\end{align}
Clearly, we will have $\|\ZS(\bar d)\|_\infty= \|\ZS(d)\|_\infty$ . Applying \lem{L6}, we also get $\|d\|_p\le \|\bar d\|_p$ and therefore $\ZU(d)\le \ZU(\bar d)$. Completing Procedure-2, we will have proved required relation \e{x46}. 
To finalize the proof of theorem it remains to apply \e{x46} consecutively. Then we get 
\begin{equation}
	A_{p,n}({\rm diadic})=A_{p,n}(1-{\rm Rademacher})=\left\|\frac{1}{\sqrt n}\sum_{k=1}^nr_k\right\|_p,
\end{equation}
where $r_k$ are the Rademacher functions.
\end{proof}
\section{Proofs of corollaries}
Corollaries \ref{C1} and \ref{C2} immediately follow from Theorems \ref{T1} and \ref{T2}.
\begin{proof}[Proof of \cor{C3}]
Writing Taylor series of function $e^{t}$, we get
\begin{equation}\label{x70}
	\ZE\left[\exp\left(\lambda\cdot \left(\frac{\sum_{j=1}^nd_j}{\|\ZS(d)\|_\infty}\right)^2\right)\right]=
	\sum_{k=0}^\infty\frac{\lambda^k}{ k!}\left\|\frac{\sum_{j=1}^nd_j}{\|\ZS(d)\|_\infty}\right\|_{2k}^{2k}
\end{equation}
Then applying \trm{T2}, we can say that the quantities $\left\|\sum_{j=1}^nd_j/\|\ZS(d)\|_\infty\right\|_{2k}^{2k}$ attain their maximums when $d_j=r_j/\sqrt{n}$. On the other hand we have
\begin{equation}
	\left\|\frac{1}{\sqrt{n}}\sum_{j=1}^nr_j\right\|_{2k}^{2k}<2^{k}\Gamma(k+1/2)/\sqrt{\pi}=\frac{(2k)!}{2^k \cdot k!}.
\end{equation} 
Thus, using the Binomial series of $(1-x)^{-1/2}$, \e{x70} can be estimated by
\begin{equation*}
	\sum_{k=0}^\infty \frac{(2k)!}{2^k(k!)^2}\lambda^k=\frac{1}{\sqrt{1-2\lambda}}.
\end{equation*}
\end{proof}
\begin{proof}[Proof of \cor{C4}]
Let $f=\sum_{j=1}^nd_j/\|\ZS(d)\|_\infty$
and $u>0$. Applying \e{x59}, we can write
\begin{equation}\label{x71}
\ZE(\psi(f/u))\le \frac{1}{\sqrt{1-2/u^2}}-1.
\end{equation}
Hence, according to the definition of Luxembourg norm we obtain $\|f\|_\psi\le \sqrt{8/3}$, since for $u=\sqrt{8/3}$ the right hand side of \e{x71} is $1$.
\end{proof}
\section{Open problems} 
The question whether inequality \e{x58} holds for parameters $2<p<3$ remains open. Note that according to \trm{T1} it is enough to consider Haar martingales insted of general discrete martingale-differences. Some standard calculations show that function \e{x31} is not increasing if $2<p<3$: it decreases over $(0,c\xi)$ and increases over $(c\xi, \infty)$, where $c\approx 1.2$ is an absolute constant. This is the only issue that do not allow to apply the same technique in the case $2<p<3$. So let us state the following.
\begin{problem}
	Prove inequality \e{x80} for the parameters $2<p<3$.
\end{problem}

In general, \e{x10} and \e{x31} fail for some parameters $2<p<3$, even for the Rademacher sums. Indeed, validity of \e{x31} for some $p>2$ implies increaseness of $\|\sum_{k=1}^nr_k/\sqrt n\|_p$ with respect to $n$, while for $p=2.5$ a standard calculation shows 
\begin{align*}
	\left\|\frac{1}{\sqrt{2}}\left(r_1+r_2\right)\right\|_p^p=\frac{2^{p/2}}{2}>\left\|\frac{1}{\sqrt{3}}\left(r_1+r_2+r_3\right)\right\|_p^p=\frac{3^{p/2}}{4}+\frac{3}{4\cdot 3^{p/2}}.
\end{align*}
 \begin{problem}
 	Find the values of $A_{p,n}({\rm Haar})$ and $A_{p,n}({\rm Rademacher})$ for $2<p<3$.
 \end{problem}
For a discussion of other problems, concerning Khintchine type inequalities we refer review paper \cite{PeSh}.

\bibliographystyle{plain}

% \bib, bibdiv, biblist are defined by the amsrefs package.
\begin{bibdiv}
	\begin{biblist}
			\bib{Ast}{book}{
			author={Astashkin, Sergey V.},
			title={The Rademacher system in function spaces},
			publisher={Birkh\"{a}user/Springer, Cham},
			date={[2020] \copyright 2020},
			pages={xx+559},
			isbn={978-3-030-47889-6},
			isbn={978-3-030-47890-2},
			review={\MR{4230108}},
			doi={10.1007/978-3-030-47890-2},
		}
			\bib{Azu}{article}{
			author={Azuma, Kazuoki},
			title={Weighted sums of certain dependent random variables},
			journal={Tohoku Math. J. (2)},
			volume={19},
			date={1967},
			pages={357--367},
			issn={0040-8735},
			review={\MR{221571}},
			doi={10.2748/tmj/1178243286},
		}
		\bib{CWW}{article}{
			author={Chang, S.-Y. A.},
			author={Wilson, J. M.},
			author={Wolff, T. H.},
			title={Some weighted norm inequalities concerning the Schr\"{o}dinger
				operators},
			journal={Comment. Math. Helv.},
			volume={60},
			date={1985},
			number={2},
			pages={217--246},
			issn={0010-2571},
			review={\MR{800004}},
			doi={10.1007/BF02567411},
		}
	\bib{Haa}{article}{
		author={Haagerup, Uffe},
		title={The best constants in the Khintchine inequality},
		journal={Studia Math.},
		volume={70},
		date={1981},
		number={3},
		pages={231--283 (1982)},
		issn={0039-3223},
		review={\MR{654838}},
		doi={10.4064/sm-70-3-231-283},
	}
	\bib{Hoe}{article}{
		author={Hoeffding, Wassily},
		title={Probability inequalities for sums of bounded random variables},
		journal={J. Amer. Statist. Assoc.},
		volume={58},
		date={1963},
		pages={13--30},
		issn={0162-1459},
		review={\MR{144363}},
	}
\bib{Efr}{article}{
	author={Efron, Bradley},
	title={Student's $t$-test under symmetry conditions},
	journal={J. Amer. Statist. Assoc.},
	volume={64},
	date={1969},
	pages={1278--1302},
	issn={0162-1459},
	review={\MR{251826}},
}
\bib{Eat}{article}{
	author={Eaton, Morris L.},
	title={A note on symmetric Bernoulli random variables},
	journal={Ann. Math. Statist.},
	volume={41},
	date={1970},
	pages={1223--1226},
	issn={0003-4851},
	review={\MR{268930}},
	doi={10.1214/aoms/1177696897},
}
\bib{IvTr}{article}{
	author={Ivanisvili, Paata},
	author={Treil, Sergei},
	title={Superexponential estimates and weighted lower bounds for the
		square function},
	journal={Trans. Amer. Math. Soc.},
	volume={372},
	date={2019},
	number={2},
	pages={1139--1157},
	issn={0002-9947},
	review={\MR{3968798}},
	doi={10.1090/tran/7795},
}
	\bib{Kar4}{article}{
	author={Karagulyan, Grigori A.},
	title={Probability inequalities for multiplicative sequences of random
		variables},
	journal={Proc. Amer. Math. Soc.},
	volume={149},
	date={2021},
	number={9},
	pages={3725--3737},
	issn={0002-9939},
	review={\MR{4291573}},
	doi={10.1090/proc/15558},
}
\bib{Kar1}{article}{
	author={Karagulyan, Grigori A.},
	title={Sharp inequalities involving multiplicative chaos sums},
	journal={https://arxiv.org/abs/2212.05431},
}
	\bib{Khi}{article}{
	author={Khintchine, A.},
	title={\"{U}ber dyadische Br\"{u}che},
	language={German},
	journal={Math. Z.},
	volume={18},
	date={1923},
	number={1},
	pages={109--116},
	issn={0025-5874},
	review={\MR{1544623}},
	doi={10.1007/BF01192399},
}
\bib{Kom}{article}{
	author={Komorowski, Ryszard},
	title={On the best possible constants in the Khintchine inequality for
		$p\geq 3$},
	journal={Bull. London Math. Soc.},
	volume={20},
	date={1988},
	number={1},
	pages={73--75},
	issn={0024-6093},
	review={\MR{916079}},
	doi={10.1112/blms/20.1.73},
}
	\bib{Kwa}{article}{
	author={Kwapie\'{n}, Stanis\l aw},
	title={Decoupling inequalities for polynomial chaos},
	journal={Ann. Probab.},
	volume={15},
	date={1987},
	number={3},
	pages={1062--1071},
	issn={0091-1798},
	review={\MR{893914}},
}
\bib{PeSh}{article}{
	author={Peshkir, G.},
	author={Shiryaev, A. N.},
	title={Khinchin inequalities and a martingale extension of the sphere of
		their action},
	language={Russian},
	journal={Uspekhi Mat. Nauk},
	volume={50},
	date={1995},
	number={5(305)},
	pages={3--62},
	issn={0042-1316},
	translation={
		journal={Russian Math. Surveys},
		volume={50},
		date={1995},
		number={5},
		pages={849--904},
		issn={0036-0279},
	},
	review={\MR{1365047}},
	doi={10.1070/RM1995v050n05ABEH002594},
}
\bib{Pes}{article}{
	author={Pe\v{s}kir, Goran},
	title={Best constants in Kahane-Khintchine inequalities in Orlicz spaces},
	journal={J. Multivariate Anal.},
	volume={45},
	date={1993},
	number={2},
	pages={183--216},
	issn={0047-259X},
	review={\MR{1221917}},
	doi={10.1006/jmva.1993.1033},
}
\bib{Ros}{article}{
	author={Rosenthal, Haskell P.},
	title={On the subspaces of $L^{p}$ $(p>2)$ spanned by sequences of
		independent random variables},
	journal={Israel J. Math.},
	volume={8},
	date={1970},
	pages={273--303},
	issn={0021-2172},
	review={\MR{271721}},
	doi={10.1007/BF02771562},
}
\bib{Ste}{article}{
	author={Ste\v{c}kin, S. B.},
	title={On best lacunary systems of functions},
	language={Russian},
	journal={Izv. Akad. Nauk SSSR Ser. Mat.},
	volume={25},
	date={1961},
	pages={357--366},
	issn={0373-2436},
	review={\MR{0131097}},
}
\bib{Wang}{article}{
	author={Wang, Gang},
	title={Sharp inequalities for the conditional square function of a
		martingale},
	journal={Ann. Probab.},
	volume={19},
	date={1991},
	number={4},
	pages={1679--1688},
	issn={0091-1798},
	review={\MR{1127721}},
}
\bib{Whi}{article}{
	author={Whittle, P.},
	title={Bounds for the moments of linear and quadratic forms in
		independent variables},
	language={English, with Russian summary},
	journal={Teor. Verojatnost. i Primenen.},
	volume={5},
	date={1960},
	pages={331--335},
	issn={0040-361x},
	review={\MR{133849}},
}
\bib{You}{article}{
	author={Young, R. M. G.},
	title={On the best possible constants in the Khintchine inequality},
	journal={J. London Math. Soc. (2)},
	volume={14},
	date={1976},
	number={3},
	pages={496--504},
	issn={0024-6107},
	review={\MR{438089}},
	doi={10.1112/jlms/s2-14.3.496},
}
\end{biblist}
\end{bibdiv}
\end{document}